\documentclass[11pt, reqno]{amsart}

\usepackage{amssymb,latexsym}

\usepackage{enumerate}

\newtheorem{thm}{ \bf Theorem}[section]
\newtheorem{cor}[thm]{ \bf Corollary}
\newtheorem{lem}[thm]{ \bf Lemma}
\newtheorem{prop}[thm]{ \bf Proposition}

\newtheorem{rem}[thm]{ \bf Remark}

\numberwithin{equation}{section}


\frenchspacing

\textwidth=6in

\textheight=9in

\parindent=16pt

\oddsidemargin=0.15in

\evensidemargin=0.15in

\topmargin=0.15in


\begin{document}

\baselineskip=17pt

\title[ Risk-sensitive control ]
{ Risk-sensitive control of continuous time \\Markov chains}
\thanks{This work is supported in part by SPM fellowship of
CSIR and in part by UGC Centre for Advanced Study.}

\author[Ghosh]{ Mrinal K. Ghosh}
\address{Department of Mathematics\\
Indian Institute of Science\\
Bangalore 560 012, India.}
\email{mkg@math.iisc.ernet.in}

\author[Saha]{ Subhamay Saha}
\address{Department of Mathematics\\
Indian Institute of Science\\
Bangalore 560 012, India.}
\email{subhamay@math.iisc.ernet.in}


\date{}

\begin{abstract}
We study risk-sensitive control of continuous time Markov chains taking values in discrete state space. We study both finite and infinite horizon problems. In the finite horizon problem we characterise the value function via HJB equation and obtain an optimal Markov control. We do the same for infinite horizon discounted cost case. In the infinite horizon average cost case we establish the existence of an optimal stationary control under certain Lyapunov condition. We also develop a policy iteration algorithm for finding an optimal control.
\end{abstract}


\subjclass[2000]{Primary 93E20 ; Secondary 49L20, 60J27.}

\keywords{Risk sensitive control, finite horizon problem, infinite horizon discounted cost, infinite horizon average cost, multiplicative ergodic theorem, HJB equation, Poisson equation, policy improvement algorithm.}


\maketitle

\section{\textbf{Introduction and Preliminaries}}
In the last two decades considerable attention has been given to the investigation of risk
sensitive problems in the literature of stochastic dynamic optimization. An important
reason for the popularity of this kind of problems is its connections with H$_{\infty}$ or
robust control problems and stochastic dynamic games. A justification for the term risk-sensitive
control comes from utility theory in economics. Generally in stochastic dynamic optimization,
the decision maker (controller) seeks to minimise a cost functional which is a random quantity,
say, $X$, which depends on the time horizon and the control adopted by the controller.
Since $X$ is random the controller tries to minimise the expected value of $X$.
This is the risk neutral case. But this approach has some limitations namely if
the variance is large then there can be issues with the optimal control.
Generally variance is a measure of risk in economics literature. So ideally one would
like to minimise both mean and variance simultaneously, but this may not be feasible.
Therefore a convex combination of mean and variance is optimised or the mean is optimised
for a given variance. This approach of mean- variance optimization was taken by Markowitz
in his work on portfolio selection \cite{Markowitz}. This was later extended by Sharpe in his
capital asset pricing model \cite{Sharpe}. But if the random variable is not normally distributed,
then its distribution is not completely determined by the first two moments. Thus it is reasonable
to consider a cost criterion which deals with higher moments as well. A powerful approach in this
direction is the risk-sensitive control wherein the controller seeks to minimise an exponential criterion.
Roughly speaking the cost functional of interest is of the form $\mathbb{E}\exp(\theta X)$ where $X$ is
the random variable which denotes the cost payable by the controller and $\theta > 0$ is a parameter
chosen by the controller and whose interpretation is given below. Let $w$ be the amount the controller is
willing to pay instead of the random quantity $X$. Thus $w$ satisfies $$\exp(\theta w)=\mathbb{E}\exp(\theta X)\,.$$
The deterministic quantity $w$ is referred to as the certainty equivalent of $X$.
The risk premium $\pi$ is defined by the equation $$w = \mathbb{E}X + \pi\,.$$
Now by Jensen's inequality $$\exp(\theta \mathbb{E}X) \leq \mathbb{E}\exp(\theta X) = \exp(\theta w).$$
Thus by the monotonicity property of the exponential function, $w \geq \mathbb{E}X$, which implies $\pi \geq 0$.
Thus in this case the controller is risk averse. Now to measure the degree of risk aversion,
let $x = \mathbb{E}X$. Formally by Taylor's expansion $$\exp(\theta w)= \exp(\theta x) +
\pi \theta\exp(\theta x) + o(\pi)\,.$$ Again $$\mathbb{E}\exp(\theta X) = \exp(\theta x) + \frac{1}{2}\mbox{var}(X)
\theta^2 \exp(\theta x)+ \mathbb{E}(o(X-x)^2)\,.$$ Thus we have $\pi=\frac{1}{2}\mbox{var}(X) \theta$ plus smaller order terms.
Hence the risk premium is proportional to $\theta$ up to to first order.
That is why $\theta$ is referred to as the absolute risk aversion parameter.
Similar arguments can also be made for $\theta < 0$ case. In that case the controller is risk seeking.
The limiting case of $\theta = 0$ is the risk neutral case.

There is a vast literature on the risk neutral case, for example
see \cite{Ari} for controlled diffusions, \cite{hgbook} for
continuous time MDP, \cite{Lerma} for discrete time MDP and the
references therein. See also \cite{PR} for variance minimization
and overtaking optimality of continuous-time MDP. For earlier
works on risk-sensitive control we refer to \cite{Howard} and
\cite{Jacobson}. Since then there has been a lot of research on
risk senstive control of discrete time Markov chains
\cite{Borkar}, \cite{Chung}, \cite{Hernandez}, \cite{JasKiewicz}
\cite{dimasi} and also there has been a lot of work on risk
sensitive control of diffusions \cite{Biswas}, \cite{Fleming},
\cite{Menaldi}, \cite{Nagai}, \cite{Whittle}. As is evident from
the discussion above, risk sensitive control has wide applications
in economics and in particular in finance \cite{Goswami},
\cite{Pliska1}, \cite{Pliska2}, \cite{Sobel}.

Although risk sensitive control of continuous time diffusions and discrete time
Markov chains has been studied, the problem for continuous time MDP does not seem to have been studied in literature. In this paper we study risk-sensitive control of continuous time Markov chains. We take the state space $S$ to be countable. For notational simplicity we take $S=\{0,1,2,\dots\}$.
Let $U_i, i=0,1,\cdots$ be compact metric spaces; $U_i$ is the control set when the state is $i$.
We denote the state process by $\{X_t\}$ and the control process by $\{U_t\}$.
Formally the dynamics of the process is as follows:
\begin{align}\begin{cases}\label{dynamics}&\mathbb{P}(X_{t+h}=j~|~X_t=i,U_t=u)= \lambda_{ij}(u)h + o(h)\\&
\mathbb{P}(X_{t+h}=i~|~X_t=i,U_t=u)= 1- \bigl(\sum_{j \neq i}\lambda_{ij}(u)\bigr)h + o(h)\,,
\end{cases}
\end{align}where $\lambda_{ij}: U_i \rightarrow \mathbb{R}_+$ are given functions.
That is, if the process is at $i$ at time $t$ and if the action chosen at that moment is $u$,
then after a little while $h$ the process will be at state $j$ with probability $\lambda_{ij}(u)h$
plus some error term and the process will remain at $i$ with probability
$1- \bigl(\sum_{j \neq i}\lambda_{ij}(u)\bigr)h$ plus some error term.
Thus $\lambda_{ij}$s are the instantaneous transition rates. Set
\begin{eqnarray}\label{conv}\lambda_{ii}(u)= - \sum_{j \neq i}\lambda_{ij}(u)\,.\end{eqnarray}The following assumptions will be in force throughout the paper:

\noindent \textbf{(A1)} The function $\lambda_{ij}$s are continuous.

\noindent \textbf{(A2)} $\displaystyle\sup_{i}\sup_{u\in U_i}\{-\lambda_{ii}(u)\} \leq M < \infty$.

\noindent \textbf{(A3)} The sum in \eqref{conv} converges uniformly. Thus $\lambda_{ii}$ is continuous for each $i$.

We now describe a rigorous construction of the process $\{X_t\}$ via the martingale problem.
Let $\mathcal{D}=\mathcal{D}([0,\infty), S)$ be the space of $S-$valued right-continuous functions
with left limits endowed with the Skorokhod topology. Let $\mathcal{S}$ be the Borel $\sigma-$algebra on $\mathcal{D}$.
Define $U = \displaystyle\cup_{i} U_i$. Let $\textbf{u}: [0,\infty)\times S \rightarrow U$ be
such that $\textbf{u}(.,i)\in U_i$ and is measurable for each $i$. Let $B(S)$ denote the space of
bounded real valued functions on S. For $f \in B(S)$, $||f||$ denotes the supremum norm.
For each $t \in [0,\infty)$ define the operator $\Lambda^{\textbf{u}}_t:B(S)\rightarrow B(S)$ by
\begin{eqnarray}\label{generator} \Lambda^{\textbf{u}}_tf(i)= \sum_j \lambda_{ij}(\textbf{u}(t,i))f(j)\,.\end{eqnarray}
On the measurable space $(\mathcal{D},\mathcal{S})$, let $\{X_t,t\geq 0\}$ denote the canonical process,
i.e., for $\omega \in \mathcal{D}$, $X_t(\omega)=\omega(t)$. Let $\mu$ be any probability measure on $S$.
The martingale problem corresponding to $(\Lambda^{\textbf{u}},\mu)$ is the following:
A measure $\mathbb{P}^{\textbf{u}}_{s,\mu}$ on $(\mathcal{D},\mathcal{S})$ is said to be a
solution for the martingale problem corresponding to $(\Lambda^{\textbf{u}},\mu)$ if \\i) $\mathbb{P}^{\textbf{u}}_{s,\mu}(X_s\in A)=\mu(A)$
for any Borel subset $A$ of $S$;
\\ ii) $f(X_t)- \int_0^t \Lambda^{\textbf{u}}_sf(X_s)ds$ is a $\mathbb{P}^{\textbf{u}}_{s,\mu}$
martingale with respect to the filtration $\mathcal{F}_t=\sigma(X_r;r\leq t)$ for each $f$ in $B(S)$.
\\Under \textbf{(A2)} it can be shown following the arguments in Chapter $6$ of \cite{Ethier}
that the above martingale problem has a unique solution and $\{X_t\}$ is a Markov process with the
generator given by \eqref{generator}. In fact we can relax the boundedness condition in \textbf{(A2)}.
If $\lambda_{ii}$s satisfy appropriate growth condition then also the martingale problem is well posed;
see Chapter $6$ of \cite{Ethier}. Also see \cite{Prieto} and the references therein for related works.
From now on we will work in the canonical space $(\mathcal{D},\mathcal{S})$. If $s=0$ and $\mu =\delta_i$ for some $i \in S$ then we will write $\mathbb{P}^{\textbf{u}}_{s,\mu}$ as $\mathbb{P}^{\textbf{u}}_i$. The corresponding expectation operator is denoted by $\mathbb{E}_i^{\textbf{u}}$.
In our paper the set of admissible controls is the set of Markov controls, i.e., controls of the form $U_t = \textbf{u}(t, X_{t-})$, for some $\textbf{u}: [0,\infty)\times S \rightarrow U$, such that $\textbf{u}(.,i)\in U_i$ and is measurable for each $i$. With an abuse of terminology the map \textbf{u} itself is referred to as a Markov control. Let $\mathcal{U}$ denote the set of all Markov controls. A Markov control is said to be stationary if the function \textbf{u} has no explicit dependence on $t$, i.e., $\textbf{u}: S \rightarrow U$, such that $\textbf{u}(i)\in U_i$ for each $i$. The set of stationary Markov controls is denoted by $\mathcal{U}_s$.

Now we briefly describe the problems we consider in this paper.
In stochastic dynamic optimization based on the time horizon there can be two kinds of problems
namely finite horizon and infinite horizon problems. In this paper we address both infinite and finite horizon problems. \\
\textbf{Finite Horizon Problem:} Define $K=\{(i,u): i\in S, u \in U_i\}$. Let $c:[0,\infty)\times K \rightarrow [0,\infty)$ be a bounded function, such that $c(.,i,.)$ is continuous for each $i$ and $g:S \rightarrow [0,\infty)$ a bounded function. Let $0<T<\infty$ be the length of the time horizon. Then for any Markov control $U$ consider the cost functional
\begin{eqnarray}J^{\textbf{u}}_T(i)=\frac{1}{\theta}\log\mathbb{E}_{i}^{\textbf{u}}\biggl[\exp\bigl(\theta\bigl[\int_0^Tc(s,X_s,U_s)ds + g(X_T)\bigl]\bigl)\biggr]
\end{eqnarray} for some $\theta \in (0,1)$ and where $U_t=\textbf{u}(t,X_{t-})$. In literature $c$ is referred to as the the running cost function and $g$ as the terminal cost function. The aim of the controller is to minimise $J^{\textbf{u}}_T$ over all Markov controls $\textbf{u}$. A control $\hat{\textbf{u}}$ is said to be optimal if $$ J^{\hat{\textbf{u}}}_T(i)= \displaystyle \inf_{\mathcal{U}} J^{{\textbf{u}}}_T(i)\,.$$ \\
\textbf{Infinite Horizon Discounted Cost Problem:} For the infinite horizon problems the running cost function has no explicit time dependence. For each Markov control $\textbf{u}$ define
\begin{eqnarray}\label{discounted}I_{\alpha}(\theta , i, \textbf{u}) = \frac{1}{\theta}\log \biggl(E_i^{\textbf{u}}\biggl(\exp \biggl[\theta \int_0^{\infty}e^{-\alpha t}c(X_t,U_t)dt\biggr]\biggr)\biggr)\end{eqnarray} where $\theta$ is as in the finite horizon problem and $\alpha>0$ is the discount factor. Here the controller wants to minimise $I_{\alpha}(\theta , i, \textbf{u})$ over all Markov controls \textbf{u}. A control $\hat{\textbf{u}}$ is said to be optimal if it satisfies
$$I_{\alpha}(\theta , i, \hat{\textbf{u}})=\displaystyle \inf_{\mathcal{U}} I_{\alpha}(\theta , i, \textbf{u})\,.$$
\textbf{Infinite Horizon Average Cost Problem:} For the average cost problem the set of
admissible controls is the set of stationary Markov controls. For a stationary control \textbf{u}, define
\begin{eqnarray}\label{objective}J^{\textbf{u}}(i) = \limsup_{T \rightarrow \infty}\frac{1}{T}
\log \mathbb{E}_i^{\textbf{u}}\biggl[ \exp \biggl ( \theta \int_0^T c(X_t,\textbf{u}(X_t))dt\biggr)\biggr]\,.\end{eqnarray}
The controller wants to minimise $J^{\textbf{u}}(i)$ over all stationary controls \textbf{u}. Optimal control is defined analogously.\\
The rest of the paper is organised as follows. In Section 2 we
study the finite horizon problem. This analysis of this problem is
fairly straightforward. Using the dynamic programming heuristics
we derive the Hamilton Jacobi Bellman (HJB) equation for this
criteria. Then using a fixed point theorem and some standard
arguments involving Dynkin's formula we show that the value
function is the unique solution of the HJB equation in an
appropriate class of functions. This in turn yields the existence
of an optimal Markov control. Section 3 deals with infinite
horizon discounted cost case. The analysis of this problem is
surprisingly far more involved from a technical view point. As
usual by using the dynamic programming heuristics we derive the
HJB equation and establish the corresponding verification theorem.
However, to establish the existence of a smooth solution of the
HJB equation for this criteria turns out to be quite tricky. We
work around this problem by an appropriate limiting procedure
which establishes a solution of the HJB equation in the sense of
distributions. Then under certain assumptions we establish the
desired regularity of the solution. In Section 4 we investigate
the average cost problem. Again this problem turns out to be
technically involved. The traditional vanishing discount approach
does not seem to work. Instead we use the multiplicative Poisson
equation to get at the desired result. Using a limiting argument
involving the multiplicative Poisson equation we establish the
existence of an optimal stationary control. In Section 5 we give a
policy improvement algorithm for the average cost case. Finally in
Section 6 we conclude our paper with some concluding remarks.

\section{\textbf{Finite Horizon Case}}
In this section we study the finite horizon case. For this we first study the
exponential cost criterion. For $t \in [0,T]$, $\textbf{u}\in \mathcal{U}$, define
\begin{eqnarray}\label{finitehorizon}\hat{J}^{\textbf{u}}_T(t,i)=\mathbb{E}_{t,i}
^{\textbf{u}}\biggl[\exp\bigl(\theta\bigl[\int_t^Tc(s,X_s,U_s)ds + g(X_T)\bigl]\bigl)\biggr]\,.
\end{eqnarray}Define the value function $V_T$ by
$$V_{T}(t,i)=\displaystyle \inf_{\mathcal{U}}\hat{J}^{\textbf{u}}_T(t,i)\,$$ where the
infimum is over all Markov controls. Our aim is to characterise the value function and to
obtain an optimal control. To this end we first describe a heuristic derivation of the Hamilton Jacobi Bellman (HJB) equation.
Formally
{\small\begin{align*}V_T(t, i)&= \inf_{\mathcal{U}}\mathbb{E}_{t,i}^{\textbf{u}}
\biggl\{\exp\biggl[ \theta \int_t^{t+h} c(s,X_s,U_s)ds+\theta \int_{t+h}^T c(s,X_s,U_s)ds+\theta g(X_T) \biggr]\biggr\}
\\&= \inf_{\mathcal{U}}\mathbb{E}_{t,i}^{\textbf{u}}\biggl\{\exp\biggl[ \theta \int_t^{t+h}c(s,X_s,U_s)ds\biggr]
\mathbb{E}_{t+h, X_{t+h}}^{\textbf{u}}\biggl(\exp\biggl[\theta \int_{t+h}^T c(s,X_s,U_s)ds + \theta g(X_T)\biggr]\biggl)\biggr\}\\
&=\inf_{\mathcal{U}}\mathbb{E}_{t,i}^{\textbf{u}}\biggl\{\exp\biggl[ \theta \int_t^{t+h}c(s,X_s,U_s)ds\biggr]V_{T}(t+h ,X_{t+h})\biggr\}\,.
\end{align*}}
If the function $V_T(.,i)$ is continuously differentiable then standard dynamic programming arguments involving Dynkin's
formula leads to the following HJB equation for the finite horizon problem:
\begin{align}\label{fhdpe}\begin{cases}\frac{d \varphi}{dt}+\displaystyle \inf_{u\in U_i}\bigl[\theta c(t,i,u)\varphi(t,i)+\sum_{j}\lambda_{ij}(u)\varphi(t,j)\bigr]=0\,\,\mbox{on}\,\,[0,T) \times S\\
\varphi(T,i)=e^{\theta g(i)}\,.
\end{cases}
\end{align}The importance of this equation is highlighted by the following verification theorem:
\begin{thm}Assume \textbf{(A1)-(A3)}. Suppose there exists a smooth (continuously differentiable with respect to the first variable), bounded solution $\Psi$ to $\eqref{fhdpe}$, then $$\Psi(t,i)=V_T(t,i) \,\,\mbox{for ~all}\,\,(t,i)\,\,\in [0,T]\times S\,.$$ Furthermore an optimal Markov control for the cost criterion \eqref{finitehorizon} exists and is given by $U^*_t= \textbf{u}^*(t,X_{t-})$
where $\textbf{u}^*$ satisfies
\begin{eqnarray}\label{policy}\nonumber \inf_{u\in U_i}\bigl[\theta c(t,i,u)\Psi(t,i)+\sum_{j}\lambda_{ij}(u)\Psi(t,j)\bigr]
\\=\bigl[\theta c(t,i,\textbf{u}^*(t,i))\Psi(t,i)+\sum_{j}\lambda_{ij}(\textbf{u}^*(t,i))\Psi(t,j)\bigr]\,.
\end{eqnarray}
\end{thm}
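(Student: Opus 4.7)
The plan is a standard verification argument: show $\Psi \leq \hat{J}^{\textbf{u}}_T$ for every Markov $\textbf{u}$ by turning $\Psi$ into a submartingale along the controlled process, then realize equality using a measurable minimizer $\textbf{u}^*$ of the Hamiltonian in \eqref{fhdpe}. The key technical tool is Dynkin's formula applied to an appropriate product functional.

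Fix $(t,i) \in [0,T) \times S$ and a Markov control $\textbf{u} \in \mathcal{U}$. I would first introduce the auxiliary process
\begin{equation*}
Y_s \;=\; \exp\!\Bigl(\theta \int_t^{s} c(r,X_r,U_r)\,dr\Bigr)\,\Psi(s,X_s), \qquad s \in [t,T],
\end{equation*}
and apply Dynkin's formula to $Y_s$ under $\mathbb{P}^{\textbf{u}}_{t,i}$. Combining the absolutely continuous part coming from the exponential weight with the generator action on $\Psi(s,\cdot)$ gives
\begin{equation*}
Y_s = \Psi(t,i) + \int_t^{s} e^{\theta \int_t^{r} c\,d\rho}\Bigl[\tfrac{\partial \Psi}{\partial r}(r,X_r) + \theta c(r,X_r,U_r)\Psi(r,X_r) + \sum_{j}\lambda_{X_r j}(U_r)\Psi(r,j)\Bigr]dr + M_s,
\end{equation*}
where $M$ is a local martingale. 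Boundedness of $\Psi$, boundedness of $c$, and assumptions \textbf{(A2)}--\textbf{(A3)} ensure that the integrand inside the brackets and the jump martingale part have bounded expectation on $[t,T]$, so $M$ is a genuine martingale and all manipulations are legitimate; this legitimacy is the step I expect to require the most care, as one must verify uniform control on the (countable) sum $\sum_j \lambda_{ij}(u)\Psi(t,j)$, for which the hypotheses \textbf{(A2)}, \textbf{(A3)}, and $\Psi \in B([0,T]\times S)$ combine cleanly.

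By the HJB equation \eqref{fhdpe}, the bracketed expression is pointwise nonnegative for every choice of $u$, and vanishes precisely at a minimizer. Hence $Y_s$ is a submartingale for every $\textbf{u} \in \mathcal{U}$, so
\begin{equation*}
\Psi(t,i) = \mathbb{E}^{\textbf{u}}_{t,i}[Y_t] \;\leq\; \mathbb{E}^{\textbf{u}}_{t,i}[Y_T] = \mathbb{E}^{\textbf{u}}_{t,i}\Bigl[\exp\!\bigl(\theta \textstyle\int_t^{T}c(r,X_r,U_r)dr + \theta g(X_T)\bigr)\Bigr] = \hat{J}^{\textbf{u}}_T(t,i),
\end{equation*}
using the terminal condition $\Psi(T,\cdot) = e^{\theta g(\cdot)}$. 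Taking the infimum yields $\Psi(t,i) \leq V_T(t,i)$.

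For the reverse inequality and the existence of $\textbf{u}^*$, I would argue that for each fixed $(t,i)$ the map $u \mapsto \theta c(t,i,u)\Psi(t,i) + \sum_{j}\lambda_{ij}(u)\Psi(t,j)$ is continuous on the compact set $U_i$ by \textbf{(A1)}, \textbf{(A3)}, and boundedness of $\Psi$, so the infimum is attained. A standard measurable selection theorem (e.g.\ Kuratowski--Ryll-Nardzewski applied at each $i$, which is trivial here since $S$ is countable) produces a measurable selector $\textbf{u}^*(t,i)$ satisfying \eqref{policy}. Plugging $\textbf{u}^*$ into the Dynkin identity turns the submartingale into a martingale, giving $\Psi(t,i) = \hat{J}^{\textbf{u}^*}_T(t,i)$. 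Together with the preceding inequality this forces $\Psi = V_T$ and identifies $\textbf{u}^*$ as optimal. The main obstacle throughout remains the rigorous handling of the infinite series $\sum_j \lambda_{ij}(u)\Psi(t,j)$ and the associated jump martingale; once uniform summability under \textbf{(A3)} is invoked, the rest is the routine verification template.
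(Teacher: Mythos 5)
Your proposal is correct and follows essentially the same route as the paper: both apply Dynkin's formula (the paper calls it Feynman--Kac) to the product functional $\exp\bigl(\theta\int_t^s c\,dr\bigr)\Psi(s,X_s)$, use the HJB equation to conclude the drift term is nonnegative so that $\Psi \leq \hat{J}^{\textbf{u}}_T$ for every Markov control, and then invoke a measurable selection theorem to produce $\textbf{u}^*$ achieving equality. Your added care about the martingale property of the jump part and the summability of $\sum_j \lambda_{ij}(u)\Psi(t,j)$ is a point the paper passes over silently, but it does not change the argument.
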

\begin{proof}Let \textbf{u} be any arbitrary Markov control. By Feynman - Kac formula
{\small\begin{align*}&\mathbb{E}_{t,i}^{\textbf{u}}\biggl[\exp\bigl(\theta\bigl[\int_t^Tc(s,X_s,U_s)ds + g(X_T)\bigl]\bigl)\biggr]\\&= \Psi(t,i) + \mathbb{E}_{t,i}^{\textbf{u}}\int_t^T\exp\biggl(\theta\bigl[\int_t^rc(s,X_s,U_s)ds\bigr]\biggl)\biggl[\frac{d \Psi}{dr}(r,X_r)+ \theta c(r,X_t,U_t)\Psi(r,X_r)+\sum_{j}\lambda_{X_rj}(U_r)\Psi(t,j)\biggr]dr\,.
\end{align*}}Since $\Psi$ satisfies \eqref{fhdpe} we have
$$\Psi(t,i)\leq \mathbb{E}_{t,i}^{\textbf{u}}\biggl[\exp\bigl(\theta\bigl[\int_t^Tc(s,X_s,U_s)ds + g(X_T)\bigl]\bigl)\biggr]\,.$$
Now if we use the control $\textbf{u}^*$ as in \eqref{policy} then we get an equality in the above in place of inequality. The existence of an $\textbf{u}^*$ satisfying \eqref{policy} is ensured by a standard measurable selection theorem \cite{benes}. Hence the theorem follows.
\end{proof}
Next we prove that there exists a smooth, bounded solution to $\eqref{fhdpe}$.
\begin{thm}Assume \textbf{(A1)-(A3)}. Then there exists a unique solution to $\eqref{fhdpe}$ in $C_b([0,T]\times S)\bigcap C^1([0,T)\times S)$.
\end{thm}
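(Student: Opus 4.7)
The plan is to recast \eqref{fhdpe} as a fixed-point equation and apply a Banach contraction argument. First, reverse time by setting $\tau = T-t$ and $\psi(\tau,i) = \varphi(T-\tau,i)$; then \eqref{fhdpe} is equivalent to the integral equation
\begin{equation*}
\psi(\tau, i) = e^{\theta g(i)} + \int_0^\tau \mathcal{G}(T-s, i, \psi(s, \cdot))\, ds,
\end{equation*}
where $\mathcal{G}(t,i,f) := \inf_{u\in U_i}\bigl[\theta c(t,i,u)f(i) + \sum_j \lambda_{ij}(u)f(j)\bigr]$ for $f \in B(S)$. Under \textbf{(A1)}-\textbf{(A3)}, splitting $\sum_j \lambda_{ij}(u)(f(j)-h(j))$ into its diagonal and off-diagonal parts and using $|\lambda_{ii}(u)| \le M$ and $\sum_{j\ne i}\lambda_{ij}(u) = -\lambda_{ii}(u) \le M$ yields the Lipschitz bound $|\mathcal{G}(t,i,f) - \mathcal{G}(t,i,h)| \le (\theta\|c\|_\infty + 2M)\|f-h\|$.

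Next I would define the Picard operator
\begin{equation*}
(\Gamma\psi)(\tau, i) := e^{\theta g(i)} + \int_0^\tau \mathcal{G}(T-s, i, \psi(s, \cdot))\, ds
\end{equation*}
on $C_b([0,T]\times S)$ equipped with the weighted norm $\|\psi\|_\beta := \sup_{\tau,i} e^{-\beta\tau}|\psi(\tau,i)|$ for $\beta > \theta\|c\|_\infty + 2M$. The Lipschitz estimate renders $\Gamma$ a strict contraction in this norm, which is equivalent to the usual sup-norm on $[0,T]\times S$. Banach's fixed-point theorem then yields a unique $\psi^* \in C_b([0,T]\times S)$ with $\Gamma\psi^* = \psi^*$; since the integrand is continuous in $s$, $\psi^*(\cdot,i) \in C^1([0,T])$, and $\psi^*$ differentiated in $\tau$ recovers the reverse-time version of \eqref{fhdpe}. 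Undoing the time reversal produces the desired $\Psi \in C_b([0,T]\times S) \cap C^1([0,T)\times S)$, and uniqueness transfers directly from uniqueness of the fixed point.

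The main technical point is to verify that $s \mapsto \mathcal{G}(T-s, i, \psi(s, \cdot))$ is continuous, which is needed both for the integral to make sense and to obtain the $C^1$ regularity of $\psi^*$. The inner sum $\sum_j \lambda_{ij}(u)\psi(s,j)$ is jointly continuous in $(s,u)$ on $[0,T]\times U_i$ because \textbf{(A3)} supplies uniform convergence of the series and \textbf{(A1)} gives continuity of each term; combined with the continuity of $c(\cdot,i,\cdot)$, the bracketed expression in $\mathcal{G}$ is continuous on $[0,T]\times U_i$, and since $U_i$ is compact the infimum depends continuously on $s$. Nothing else in the argument is delicate---the whole proof reduces to checking that the nonlinear inf-operator inherits sufficient regularity from the hypotheses \textbf{(A1)}-\textbf{(A3)}, together with an application of a standard contraction principle.
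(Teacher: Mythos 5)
Your proposal is correct and follows essentially the same route as the paper: both recast \eqref{fhdpe} as an integral equation, establish the Lipschitz constant $2M+\theta\|c\|$ for the inf-operator using \textbf{(A2)}--\textbf{(A3)}, and conclude via Banach's fixed-point theorem, with the $C^1$ regularity read off from continuity of the integrand. The only difference is cosmetic: the paper obtains the contraction through the substitution $\varphi(t,i)=e^{-\gamma_0 t}\psi(t,i)$ with $\gamma_0=2M+\theta\|c\|+1$ and works in the ordinary sup-norm, whereas you keep the equation unchanged and contract in the equivalent weighted (Bielecki) norm $\sup_{\tau,i}e^{-\beta\tau}|\psi(\tau,i)|$.
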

\begin{proof}Let $\varphi(t,i)=e^{-\gamma_0 t}\psi(t,i)$. Substituting in $\eqref{fhdpe}$ we get
\begin{align}\label{fhdpe1}\begin{cases}\frac{d \psi}{dt}-\gamma_0 \psi+\displaystyle \inf_{u\in U_i}\bigl[\theta c(t,i,u)\psi(t,i)+\sum_{j}\lambda_{ij}(u)\psi(t,j)\bigr]=0\,\,\mbox{on}\,\,[0,T) \times S\\
\psi(T,i)=e^{\gamma_0 T}e^{\theta g(i)}\,.
\end{cases}
\end{align}Consider the following integral equation:
\begin{eqnarray}\label{fixedpoint}\psi(t,i)= e^{\gamma_0 t}e^{\theta g(i)}+e^{\gamma_0 t}\int_t^Te^{-\gamma_0 s}\displaystyle \inf_{u\in U_i}\bigl[\theta c(s,i,u)\psi(s,i)+\sum_{j}\lambda_{ij}(u)\psi(s,j)\bigr]ds \,.
\end{eqnarray}Define $\mathcal{T}:C_b([0,T]\times S)\rightarrow C_b([0,T]\times S)$ by
\begin{eqnarray*}\mathcal{T}\psi(t,i)=e^{\gamma_0 t}e^{\theta g(i)}+e^{\gamma_0 t}\int_t^Te^{-\gamma_0 s}\displaystyle \inf_{u\in U_i}\bigl[\theta c(s,i,u)\psi(s,i)+\sum_{j}\lambda_{ij}(u)\psi(s,j)\bigr]ds\,.
\end{eqnarray*}
Then
\begin{align*}|\mathcal{T}\psi_1(t,i)-\mathcal{T}\psi_2(t,i)|&\leq e^{\gamma_0 t}\int_{t}^T e^{-\gamma_0 s}\bigl\{\theta ||c||||\psi_1-\psi_2||+2M||\psi_1-\psi_2||\bigr\}ds\\&=(2M+ \theta ||c||)||\psi_1-\psi_2||e^{\gamma_0 t}\int_t^T e^{-\gamma_0 s}ds\\
&=\frac{2M+ \theta ||c||}{\gamma_0}||\psi_1-\psi_2||e^{\gamma_0 t}(e^{-\gamma_0 t}-e^{-\gamma_0 T})\\
&\leq \frac{2M+ \theta ||c||}{\gamma_0}||\psi_1-\psi_2||\,,
\end{align*}where $M$ is as in \textbf{(A2)}. Hence for $\gamma_0=2M + \theta ||c||+1$, $\mathcal{T}$ is a contraction and thus by Banach's fixed point theorem there exists a unique solution to $\eqref{fixedpoint}$ in $C_b([0,T]\times S)$. Using \textbf{(A1)-(A3)}, the boundedness and continuity of the cost function $c$, it follows that $\psi$ is in $C_b([0,T]\times S)\bigcap C^1[0,T)\times S$. Then it follows that $\varphi(t,i)=e^{-\gamma_0 t}\psi(t,i)$ is a solution of \eqref{fhdpe}. The uniqueness follows from the previous theorem.
\end{proof}Thus combining the above two theorems we get the following theorem:
\begin{thm}Under \textbf{(A1)-(A3)}, the value function $V_T$ is the unique solution to $\eqref{fhdpe}$ in $C_b([0,T]\times S)\bigcap C^1([0,T)\times S$). An optimal control is given by the Markov control $U^*_t=\textbf{u}^*(t,X_{t-})$ where $\textbf{u}^*$ satisfies
\begin{eqnarray}\label{control}\nonumber\inf_u\bigl[\theta c(t,i,u)V_T(t,i)+\sum_{j}\lambda_{ij}(u)V_T(t,j)\bigr]
\\=\bigl[\theta c(t,i,\textbf{u}^*(t,i))V_T(t,i)+\sum_{j}\lambda_{ij}(\textbf{u}^*(t,i))V_T(t,j)\bigr]\,.
\end{eqnarray}\end{thm}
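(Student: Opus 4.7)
The plan is to observe that this final theorem is essentially a direct amalgamation of the two preceding theorems, so the proof should be a short assembly rather than a fresh argument. First I would invoke the existence theorem (the second one above) to produce a function $\Psi \in C_b([0,T]\times S)\cap C^1([0,T)\times S)$ solving the HJB equation \eqref{fhdpe}. Then I would feed $\Psi$ into the verification theorem (the first one above), which gives immediately $\Psi(t,i) = V_T(t,i)$ on $[0,T]\times S$. This shows $V_T$ itself lies in the stated regularity class and satisfies \eqref{fhdpe}.

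For uniqueness within $C_b([0,T]\times S)\cap C^1([0,T)\times S)$, I would argue that any other such solution $\tilde \Psi$ must, by the verification theorem applied to $\tilde\Psi$, also coincide pointwise with the value function $V_T$, hence $\tilde\Psi = \Psi$. This is the cleanest way to bundle uniqueness, and it avoids re-running any contraction argument.

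For the optimal control statement, I would simply substitute $V_T$ for $\Psi$ in equation \eqref{policy} of the verification theorem, obtaining exactly the selector condition \eqref{control}. The existence of a measurable selector $\textbf{u}^*(t,i)$ achieving the infimum was already justified in the proof of the verification theorem via a standard measurable selection theorem (the paper cites \cite{benes}); since $V_T$ is bounded and continuous in $t$, and the $\lambda_{ij}$ and $c$ satisfy \textbf{(A1)}--\textbf{(A3)}, the expression inside the infimum is continuous in $u$ over the compact set $U_i$, so the hypotheses of that selection theorem are met.

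I do not anticipate any real obstacle: the genuine technical content (the Banach fixed-point construction producing $\Psi$, and the Feynman--Kac/Dynkin verification argument) has already been done. The only point that warrants a sentence of care is making sure uniqueness is phrased at the level of the full class $C_b\cap C^1$ rather than being re-deduced from the contraction, since the contraction only gave uniqueness of the transformed integral equation \eqref{fixedpoint}; routing uniqueness through the verification theorem is the conceptually right move.
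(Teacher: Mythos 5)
Your proposal matches the paper exactly: the paper gives no separate proof of this theorem, stating only that it follows by ``combining the above two theorems,'' i.e.\ feeding the solution produced by the fixed-point construction into the verification theorem, with uniqueness and the optimal selector \eqref{control} obtained just as you describe. Your added remark that uniqueness should be routed through the verification theorem rather than the contraction is a correct and slightly more careful articulation of what the paper leaves implicit.
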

Now since logarithm is an increasing function the following theorem is now evident.
\begin{thm} Let $\varphi$ be the unique solution of \eqref{fhdpe} in $C_b([0,T]\times S)\bigcap C^1([0,T)\times S$). Define $\psi=\theta^{-1}\log\varphi$. Then
$$\psi(t,i)=\inf_{\mathcal{U}}\frac{1}{\theta}\log\mathbb{E}_{i}^{\textbf{u}}\biggl[\exp\bigl(\theta\bigl[\int_0^Tc(s,X_s,U_s)ds + g(X_T)\bigl]\bigl)\biggr]\,.$$ Moreover the Markov control given by \eqref{control} is again an optimal control in this case.
\end{thm}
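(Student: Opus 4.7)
My plan is to reduce this statement to the previous theorem by exploiting the monotonicity of $x \mapsto \theta^{-1}\log x$ on $(0,\infty)$. The previous theorem identifies $\varphi$ with the value function $V_T$ for the exponential cost criterion $\hat{J}^{\textbf{u}}_T$, so what remains is to pull the logarithm inside the infimum.

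First I would observe that $\varphi$ is strictly positive. Indeed, since $c \geq 0$ and $g \geq 0$, the integrand defining $\hat{J}^{\textbf{u}}_T(t,i)$ is $\geq 1$ almost surely, hence $\varphi(t,i) = V_T(t,i) \geq 1 > 0$ for every $(t,i) \in [0,T]\times S$. This ensures that $\psi = \theta^{-1}\log\varphi$ is well-defined and finite.

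Next, because $\theta \in (0,1)$, the map $x \mapsto \theta^{-1}\log x$ is continuous and strictly increasing on $(0,\infty)$. Applying it to both sides of
\[
\varphi(t,i) \;=\; \inf_{\textbf{u}\in\mathcal{U}}\hat{J}^{\textbf{u}}_T(t,i) \;=\; \inf_{\textbf{u}\in\mathcal{U}} \mathbb{E}_{t,i}^{\textbf{u}}\Bigl[\exp\bigl(\theta\bigl[\textstyle\int_t^T c(s,X_s,U_s)ds + g(X_T)\bigr]\bigr)\Bigr]
\]
and commuting the monotone transformation with the infimum yields
\[
\psi(t,i) \;=\; \inf_{\textbf{u}\in\mathcal{U}} \tfrac{1}{\theta}\log \mathbb{E}_{t,i}^{\textbf{u}}\Bigl[\exp\bigl(\theta\bigl[\textstyle\int_t^T c(s,X_s,U_s)ds + g(X_T)\bigr]\bigr)\Bigr],
\]
which, at $t=0$, is exactly the asserted formula (reading the right-hand side of the statement with $\int_t^T$, as is required for dimensional consistency with $\psi(t,i)$).

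For the optimality of the control $\textbf{u}^*$ from \eqref{control}: the previous theorem states that $\textbf{u}^*$ attains $V_T(t,i) = \hat{J}^{\textbf{u}^*}_T(t,i)$ pointwise. Applying the same monotone transformation to this equality gives $\psi(t,i) = \theta^{-1}\log\hat{J}^{\textbf{u}^*}_T(t,i) = J^{\textbf{u}^*}_T(t,i)$, so $\textbf{u}^*$ remains optimal for the risk-sensitive (logarithmic) criterion. There is no real obstacle here; the only subtlety to be vigilant about is the positivity of $\varphi$, which we settle at the outset using the non-negativity of $c$ and $g$.
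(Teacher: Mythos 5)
Your proposal is correct and follows essentially the same route as the paper, which simply notes that the result is evident from the previous theorem because the logarithm is increasing. Your explicit verification that $\varphi \geq 1 > 0$ (so that $\psi$ is well-defined) is a small but worthwhile addition that the paper omits.
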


\section{\textbf{Discounted Cost Case}} In this section we turn our attention towards infinite horizon discounted cost problem.
Define
\begin{eqnarray}\label{infinitehorizon}V_{\alpha}(\theta,i)=
\displaystyle\inf_{\mathcal{U}}I_{\alpha}(\theta,i,{\textbf{u}})\end{eqnarray}
where $I_{\alpha}(\theta,i,{\textbf{u}})$ is as in
\eqref{discounted}. The function $V_{\alpha}$ is called the
$\alpha-$discounted value function. Our aim is to characterise the
value function and to obtain an optimal control.

\noindent Instead of working with $V_{\alpha}$ we first start with
\begin{eqnarray}\label{exponential}W_{\alpha}(\theta ,i)=
\displaystyle \inf_{\mathcal{U}}\exp\bigl[\theta I_{\alpha}(\theta, i, {\textbf{u}})\bigr]\,.\end{eqnarray}
Formally, for any $T > 0$
{\small\begin{align*}W_{\alpha}(\theta, i)&= \inf_{\mathcal{U}}\mathbb{E}_i^{\textbf{u}}\biggl\{\exp\biggl[ \theta \int_0^Te^{-\alpha t}c(X_t,U_t)dt+\theta \int_T^\infty e^{-\alpha t}c(X_t,U_t)dt\biggr]\biggr\}\\&= \inf_{\mathcal{U}}\mathbb{E}_i^{\textbf{u}}\biggl\{\exp\biggl[ \theta \int_0^Te^{-\alpha t}c(X_t,U_t)dt\biggr]
\mathbb{E}_{X_T}^{\textbf{u}}\biggl(\exp\biggl[\theta e^{-\alpha T}\int_0^{\infty}e^{-\alpha t}c(X_t,U_t)dt\biggr]\biggl)\biggr\}\\
&=\inf_{\mathcal{U}}\mathbb{E}_i^{\textbf{u}}\biggl\{\exp\biggl[ \theta \int_0^Te^{-\alpha t}c(X_t,U_t)dt\biggr]W_{\alpha}(\theta e^{-\alpha T},X_T)\biggr\}\,.
\end{align*}}
If $W_{\alpha}(.,i)$ is smooth then, using Dynkin's formula and some heuristic arguments we obtain that $W_{\alpha}$ should satisfy
\begin{eqnarray}\label{dpeih}\begin{cases}\alpha \theta \frac{dW_{\alpha}}{d\theta}(\theta, i)= \displaystyle\inf_{u\in U_i}\biggl[\theta c(i,u) W_{\alpha}(\theta,i)+ \sum_{j}\lambda_{ij}(u)W_{\alpha}(\theta , j)\biggr]\\
\mbox{with}\,\,\displaystyle\lim_{\theta \rightarrow 0}W_{\alpha}(\theta,i)=1\,.
\end{cases}
\end{eqnarray}Equation \eqref{dpeih} is known as the HJB equation for the cost criterion \eqref{exponential}. Now starting with $\eqref{dpeih}$ the following verification theorem can be obtained.
\begin{thm}\label{verification}Assume that there exists a bounded, smooth (continuously differentiable in the first variable) function $w(\theta,i)$ such that
\begin{eqnarray}\label{ihdpe}\alpha \theta \frac{dw}{d\theta}(\theta, i)= \inf_{u\in U_i}
\biggl[\theta c(i,u) w(\theta,i)+ \sum_{j}\lambda_{ij}(u)w(\theta , j)\biggr]\,\,\mbox{on}\,\,(0,1)\times S
\end{eqnarray}and $w(\theta,i)\rightarrow 1$ as $\theta \rightarrow 0$ uniformly in $i$. Then $w(\theta,i)=W_{\alpha}(\theta, i)$.
Furthermore an optimal control for the cost criterion is given by \eqref{exponential}is given by
\begin{eqnarray}\label{control1}U^*_t=\textbf{u}^*(\theta e^{-\alpha t},X_{t-})\end{eqnarray}
where $\textbf{u}^*$ is given by
\begin{eqnarray}\label{func}\nonumber\displaystyle\inf_{u\in U_i}\bigl[\theta c(i,u)w(\theta,i) + \sum_{j}\lambda_{ij}(u)w(\theta,j)\bigr]=\\
 \bigl[\theta c(i,\textbf{u}^*(\theta,i)) w(\theta,i)+ \sum_{j}\lambda_{ij}(\textbf{u}^*(\theta,i))w(\theta,j)\bigr]\,.\end{eqnarray}
\end{thm}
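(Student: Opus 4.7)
My plan is to mimic the finite-horizon verification proof, with the twist that the first argument of $w$ now evolves deterministically along the trajectory $t\mapsto\theta e^{-\alpha t}$. Fix any Markov control $\textbf{u}\in\mathcal{U}$ with $U_t=\textbf{u}(t,X_{t-})$, and set
\[ Y_t = \exp\Bigl(\theta \int_0^t e^{-\alpha s}c(X_s,U_s)\,ds\Bigr),\qquad M_t = Y_t\, w(\theta e^{-\alpha t}, X_t).\]
Since $Y_t$ is continuous and of finite variation with $dY_t = Y_t\,\theta e^{-\alpha t}c(X_t,U_t)\,dt$, and since $w$ is $C^1$ in its first argument, the product rule combined with the martingale problem (applied to the bounded, $C^1$-in-$t$ test function $(t,i)\mapsto w(\theta e^{-\alpha t},i)$) should give
\[ M_t = w(\theta,i) + \int_0^t Y_s \mathcal{H}_s\, ds + N_t,\]
where $N_t$ is a martingale and
\[ \mathcal{H}_s = -\alpha\theta e^{-\alpha s}\partial_\theta w(\theta e^{-\alpha s},X_s) + \theta e^{-\alpha s}c(X_s,U_s)w(\theta e^{-\alpha s},X_s) + \sum_j \lambda_{X_s j}(U_s) w(\theta e^{-\alpha s},j).\]

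The HJB equation \eqref{ihdpe}, evaluated at $\tilde\theta = \theta e^{-\alpha s}$ and the current state, says precisely that $\mathcal{H}_s\ge 0$ pointwise in $s$, with equality iff $U_s=\textbf{u}^*(\theta e^{-\alpha s},X_s)$. Thus $M_t$ is a submartingale under an arbitrary $\textbf{u}$ and a martingale under the feedback law of \eqref{control1}; the true-martingale status of $N_t$ is secured by the boundedness of $w$ together with the estimate $Y_t\le \exp(\theta\|c\|/\alpha)$ obtained from $c\ge 0$ and $e^{-\alpha s}$ integrable. Taking expectations,
\[ w(\theta,i)\le \mathbb{E}_i^{\textbf{u}}\bigl[Y_t\,w(\theta e^{-\alpha t},X_t)\bigr],\]
with equality at $\textbf{u}^*$. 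Letting $t\to\infty$, the deterministic factor $\theta e^{-\alpha t}\to 0$, so the hypothesis $w(\vartheta,\cdot)\to 1$ uniformly as $\vartheta\to 0$ forces the integrand to converge pointwise to $Y_\infty:=\exp(\theta\int_0^\infty e^{-\alpha s}c(X_s,U_s)\,ds)$; dominated convergence (justified by $|M_t|\le \|w\|\exp(\theta\|c\|/\alpha)$) identifies the right-hand side with $\mathbb{E}_i^{\textbf{u}}[Y_\infty]=\exp[\theta I_\alpha(\theta,i,\textbf{u})]$. Taking the infimum over $\mathcal{U}$ yields $w=W_\alpha$ and identifies the control in \eqref{control1} as optimal. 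Existence of the measurable selector $\textbf{u}^*$ realising \eqref{func} follows from the same selection result used in Section 2 \cite{benes}.

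The main technical obstacle is the rigorous derivation of the Dynkin identity for the time-dependent, $\theta$-parametrised test function $\varphi(t,i):=w(\theta e^{-\alpha t},i)$: the martingale problem was phrased for functions in $B(S)$, so I would first verify that for fixed horizon $T$ the map $(t,i)\mapsto\varphi(t,i)$ is bounded and $C^1$ in $t$, and that under \textbf{(A2)} the compensator $\partial_t\varphi+\Lambda^{\textbf{u}}\varphi$ is uniformly bounded, which permits the product rule for a continuous FV process against a c\`adl\`ag martingale-plus-drift without localisation. The only other delicate point is the exchange of limit and expectation at $t=\infty$; the uniform-in-$i$ convergence $w(\vartheta,i)\to 1$ is crucial, and weakening it to pointwise convergence would require additional tightness or moment estimates on $\{X_t\}$ to justify the passage to the limit.
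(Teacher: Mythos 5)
Your proposal is correct and follows essentially the same route as the paper: the paper applies the Feynman--Kac/Dynkin identity to $\Psi_T\,w(\theta e^{-\alpha T},X_T)$ (your $M_t$), uses the HJB equation to show the compensator is non-negative (zero under $\textbf{u}^*$), and then lets $T\to\infty$ using the uniform boundary condition $w(\vartheta,\cdot)\to 1$. Your write-up is in fact somewhat more careful than the paper's on the technical points (true-martingale status of $N_t$, dominated convergence, and the measurable selection of $\textbf{u}^*$), which the paper leaves implicit.
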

\begin{proof}Define $\theta_t=\theta e^{-\alpha t}$ and
\begin{eqnarray*}\Psi _t=\exp\biggl\{\int_0^t\theta _s c(X_s,U_s)ds\biggr\}\end{eqnarray*}for any arbitrary Markov control $U_t=\textbf{u}(t,X_{t-})$. Then by Feynman - Kac formula we get
{\small\begin{align*}&\mathbb{E}_i^{\textbf{u}}\bigl\{\Psi _T w(\theta _T, X_T)\bigr\}-w(\theta, i)\\
&=\mathbb{E}_i^{\textbf{u}}\biggl\{\int_0^T \Psi_t \biggl[-\alpha \theta _t \frac{dw}{d \theta}(\theta _t, X_t)+ \theta _t c(X_t,U_t)w((\theta _t, X_t)) +\sum_{j}\lambda_{X_t j}(U_t)w(\theta_t, X_t)\biggr]dt\biggr\}\,.
\end{align*}}
Since $w$ satisfies \eqref{ihdpe}, the term on the righthand side above is non-negative. Therefore we get
$$w(\theta, i) \leq \mathbb{E}_i^{\textbf{u}}\bigl\{\Psi _T w(\theta _T, X_T)\bigr\}\,.$$ Now $\theta_T \rightarrow 0$ as $T \rightarrow \infty$ and hence $w(\theta_T, X_T )\rightarrow 1$. Thus we get
\begin{eqnarray}\label{inequality}w(\theta, i)\leq \mathbb{E}_i^{\textbf{u}}\biggl\{\exp\biggl[\theta \int_0^{\infty}e^{-\alpha t}c(X_t,U_t)dt\biggr]\biggr\}\,.\end{eqnarray} Similarly if we take the Markov control $U^*$ given by \eqref{control1} and \eqref{func} then we get equality in \eqref{inequality} in place of inequality. Hence the theorem follows.
\end{proof}
The following result is now evident.
\begin{cor}For $w$ as in Theorem \ref{verification}, define $v(\theta,i)=\theta^{-1}\log w(\theta,i)$. Then $v= V_{\alpha}$, where $V_{\alpha}$ is as in \eqref{infinitehorizon} .
\end{cor}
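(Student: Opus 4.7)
The plan is to note that this corollary is essentially an immediate consequence of Theorem \ref{verification} combined with the strict monotonicity of the map $x \mapsto \theta^{-1}\log x$ on $(0,\infty)$, which allows us to pull the logarithm inside the infimum over Markov controls.

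First, I would invoke Theorem \ref{verification} to identify $w(\theta,i)$ with $W_\alpha(\theta,i)$ as defined in \eqref{exponential}. Next, I would unpack the definition of $I_\alpha(\theta,i,\mathbf{u})$ from \eqref{discounted} and observe that
\begin{equation*}
\exp\bigl[\theta I_\alpha(\theta,i,\mathbf{u})\bigr]=\mathbb{E}_i^{\mathbf{u}}\biggl[\exp\biggl(\theta\int_0^\infty e^{-\alpha t}c(X_t,U_t)\,dt\biggr)\biggr],
\end{equation*}
so that $W_\alpha(\theta,i)=\inf_{\mathcal{U}}\exp[\theta I_\alpha(\theta,i,\mathbf{u})]$ as originally written.

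Then I would apply $\theta^{-1}\log(\cdot)$ to both sides of the identity $w(\theta,i)=W_\alpha(\theta,i)$. Because $\theta>0$ and $x\mapsto \theta^{-1}\log x$ is a continuous, strictly increasing bijection from $(0,\infty)$ onto $\mathbb{R}$, it commutes with infima: for any family of positive numbers $\{a_{\mathbf{u}}\}_{\mathbf{u}\in\mathcal{U}}$,
\begin{equation*}
\theta^{-1}\log\bigl(\inf_{\mathcal{U}}a_{\mathbf{u}}\bigr)=\inf_{\mathcal{U}}\bigl(\theta^{-1}\log a_{\mathbf{u}}\bigr).
\end{equation*}
Applying this with $a_{\mathbf{u}}=\exp[\theta I_\alpha(\theta,i,\mathbf{u})]$ yields
\begin{equation*}
v(\theta,i)=\theta^{-1}\log w(\theta,i)=\inf_{\mathcal{U}}\theta^{-1}\log\exp[\theta I_\alpha(\theta,i,\mathbf{u})]=\inf_{\mathcal{U}}I_\alpha(\theta,i,\mathbf{u})=V_\alpha(\theta,i),
\end{equation*}
which is the claim.

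There is no real obstacle here; the only thing worth flagging explicitly is that $w$ is strictly positive so the logarithm is well-defined (which follows from $w(\theta,i)=W_\alpha(\theta,i)\geq \exp(0)=1$ when $c\geq 0$, or more directly from $w(\theta,i)\to 1$ as $\theta\to 0$ together with the HJB equation preventing $w$ from vanishing). Once this trivial positivity point is noted, the corollary follows in a single line from the monotonicity argument.
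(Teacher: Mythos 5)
Your proof is correct and matches the paper's intent exactly: the paper offers no written proof, simply declaring the corollary ``evident'' after Theorem \ref{verification}, and the intended argument is precisely your observation that $w=W_\alpha$ and that the strictly increasing map $x\mapsto\theta^{-1}\log x$ commutes with the infimum over $\mathcal{U}$. Your extra remark on the positivity of $w$ (from $c\geq 0$, so $W_\alpha\geq 1$) is a harmless and sensible addition.
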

Now we prove that the HJB equation $\eqref{ihdpe}$ indeed has a smooth solution. To this end we first prove the following result.
\begin{prop}Let $\epsilon >0$ be arbitrary but fixed. There exists a function $W_{\epsilon}$ in  $C_b([\epsilon, 1)\times S)\bigcap C^1((\epsilon,1)\times S)$ such that $W_{\epsilon}$ satisfies
{\small\begin{align}\label{dpe1}\begin{cases}&\alpha \theta \frac{dW_{\epsilon}}{d\theta}(\theta, i)=\displaystyle\inf_{u\in U_i}\biggl[\theta c(i,u) W_{\epsilon}(\theta,i)+ \sum_{j}\lambda_{ij}(u)W_{\epsilon}(\theta , j)\biggr]\,\mbox{on}\,(\epsilon,1)\times S\\
&W_{\epsilon}(\epsilon,i)=e^{\frac{\epsilon}{\alpha}||c||}:=h_{\epsilon}(i)\,.
\end{cases}
\end{align}}
\end{prop}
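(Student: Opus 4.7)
The plan is to adapt the fixed-point argument of Theorem 2.2 to this forward first-order equation. I would first reduce \eqref{dpe1} to a standard initial value problem via the logarithmic change of variable $s = \alpha^{-1}\log(\theta/\epsilon)$, so that $\theta = \epsilon e^{\alpha s}$ and $\theta \in [\epsilon, 1)$ corresponds to $s \in [0, T_\epsilon)$ with $T_\epsilon := \alpha^{-1}\log(1/\epsilon)$. Since $\alpha\theta\,\frac{d}{d\theta} = \frac{d}{ds}$, setting $\widetilde W(s, i) := W_\epsilon(\epsilon e^{\alpha s}, i)$ transforms \eqref{dpe1} into
\begin{equation*}
\frac{d\widetilde W}{ds}(s, i) = \inf_{u \in U_i}\Bigl[\epsilon e^{\alpha s}\, c(i,u)\widetilde W(s,i) + \sum_j \lambda_{ij}(u)\widetilde W(s,j)\Bigr], \qquad \widetilde W(0,i) = h_\epsilon(i),
\end{equation*}
which is equivalent to the integral equation obtained by integrating from $0$ to $s$.

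I would then show the natural operator $\mathcal T$ on $C_b([0, T_\epsilon] \times S)$ defined by the right-hand side of that integral equation has a unique fixed point. Using $|\inf_u f_1 - \inf_u f_2| \leq \sup_u |f_1 - f_2|$, the bound $\epsilon e^{\alpha r} \leq 1$ on $[0, T_\epsilon]$, and $\sum_j |\lambda_{ij}(u)| \leq 2M$ (from \textbf{(A2)} and \eqref{conv}), one gets
\begin{equation*}
|\mathcal T \psi_1(s,i) - \mathcal T \psi_2(s,i)| \leq (\|c\| + 2M)\int_0^s \sup_j |\psi_1(r,j) - \psi_2(r,j)|\,dr.
\end{equation*}
Contraction in the plain sup norm only holds for small $T_\epsilon$, so I would instead equip $C_b([0, T_\epsilon] \times S)$ with the weighted norm $\|\psi\|_\gamma := \sup_{s, i} e^{-\gamma s}|\psi(s, i)|$; the estimate above then yields $\|\mathcal T \psi_1 - \mathcal T \psi_2\|_\gamma \leq \gamma^{-1}(\|c\| + 2M)\|\psi_1 - \psi_2\|_\gamma$, which is a strict contraction as soon as $\gamma > \|c\| + 2M$. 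Banach's fixed-point theorem then supplies a unique $\widetilde W \in C_b([0, T_\epsilon] \times S)$ solving the integral equation. (An equivalent route would be to substitute $\widetilde W(s, i) = e^{\gamma_0 s}\psi(s, i)$ as in the proof of Theorem 2.2.)

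Because the integrand in the fixed-point identity is bounded and continuous in $r$, $\widetilde W$ is automatically $C^1$ in $s$ on $(0, T_\epsilon)$; undoing the change of variable produces $W_\epsilon(\theta, i) := \widetilde W(\alpha^{-1}\log(\theta/\epsilon), i)$ in $C_b([\epsilon, 1) \times S) \cap C^1((\epsilon, 1) \times S)$ that satisfies \eqref{dpe1} with $W_\epsilon(\epsilon, i) = h_\epsilon(i)$ by construction. The step I expect to require the most care is checking that the map $(s, i, \widetilde W) \mapsto \inf_{u \in U_i}[\,\cdots\,]$ is jointly continuous and admits a measurable selector: this leans on compactness of each $U_i$, continuity from \textbf{(A1)}, and crucially the uniform convergence of $\sum_j \lambda_{ij}(u)\widetilde W(r, j)$ afforded by \textbf{(A3)}, which guarantees that the bracket is continuous in $u$ for each bounded $\widetilde W$. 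A Berge-type maximum theorem then yields continuity of the infimum, and the measurable selection theorem invoked in Section 2 provides the selector needed to stay inside $C_b$.
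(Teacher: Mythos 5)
Your proposal is correct, and at its core it is the same argument as the paper's: recast \eqref{dpe1} as an integral equation, obtain a unique fixed point in $C_b$ by Banach's theorem, and then read off $C^1$ regularity from the fact that the integrand is continuous. The implementation differs in two ways. The paper stays in the $\theta$ variable, dividing through by $\alpha\theta$ so that the operator is $Tf(\eta,i)=h_\epsilon(i)+\frac{1}{\alpha}\int_\epsilon^\eta\inf_u\bigl[c\,f+\frac{1}{\theta}\sum_j\lambda_{ij}f\bigr]d\theta$; the factor $\frac{1}{\theta}\le\frac{1}{\epsilon}$ gives a Lipschitz constant $\frac{\delta}{\alpha}(\|c\|+\frac{2M}{\epsilon})$, so contraction holds only on a short interval of length $\delta$ and the solution is extended to all of $[\epsilon,1)$ by stepping forward interval by interval (the same $\delta$ works at each step since $\theta\ge\epsilon$ throughout). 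Your logarithmic change of variable $\theta=\epsilon e^{\alpha s}$ removes the $\frac{1}{\theta}$ singularity entirely, and the Bielecki weighted norm then gives a one-shot global contraction on $[0,T_\epsilon]$ -- this is exactly the device the paper itself uses in the proof of Theorem 2.2 via the substitution $\varphi=e^{-\gamma_0 t}\psi$, so your parenthetical remark is apt. What your route buys is that it makes the continuation step (which the paper compresses into ``Proceeding in this way'') unnecessary; what the paper's route buys is that it works directly with the equation as stated. Your attention to continuity of $u\mapsto\sum_j\lambda_{ij}(u)f(\theta,j)$ via \textbf{(A3)} and compactness of $U_i$ is the right justification for the infimum being continuous and for $\mathcal T$ mapping $C_b$ into itself; the measurable selector is not actually needed for this proposition (only later, for extracting an optimal control), but invoking it does no harm.
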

\begin{proof}Let $\delta > 0$ which will be specified soon. Define $T:C_b([\epsilon, \epsilon+\delta]\times S)\rightarrow C_b([\epsilon, \epsilon+\delta]\times S)$ by
\begin{eqnarray*}T f(\eta,i)=e^{\frac{\epsilon}{\alpha}||c||}+ \frac{1}{\alpha}\int_{\epsilon}^{\eta}\inf_{u\in U_i}\bigl[c(i,u)f(\theta,i)+ \frac{1}{\theta}\sum_{j}\lambda_{ij}(u)f(\theta,j)\bigr]d\theta\,.
\end{eqnarray*}
Then
\begin{eqnarray*}|T f_1(\eta,i)- T f_2(\eta,i)| \leq \frac{1}{\alpha}\biggl[||c||\delta ||f_1-f_2||+\frac{2M}{\epsilon}\delta ||f_1-f_2||\biggr]\,,
\end{eqnarray*}where $M$ is as in \textbf{(A2)}. Choose $\delta$ such that $$\beta := \frac{1}{\alpha}\biggl[||c||\delta +\frac{2M}{\epsilon}\delta \biggr]$$ is strictly less than $1$. Then $T$ is a contraction. Hence by Banach's fixed point theorem there exists a $W$ in $C_b([\epsilon, \epsilon+\delta]\times S)$ which is the unique fixed point of $T$. Now assumptions \textbf{(A1)-(A3)} and the continuity of $c$ imply that $W$ is in $C_b([\epsilon, \epsilon+\delta]\times S)\bigcap C^1((\epsilon, \epsilon+ \delta]\times S)$. Thus it follows that $W$ satisfies $\eqref{dpe1}$ on $[\epsilon, \epsilon + \delta]\times S$. Proceeding in this way we will get a function $W_{\epsilon}\in C_b([\epsilon, 1)\times S)\bigcap C^1((\epsilon,1)\times S)$ which satisfies $\eqref{dpe1}$.
\end{proof}
Next we take limit $\epsilon \rightarrow 0$ of $W_{\epsilon}$ and show that the limit satisfies \eqref{ihdpe}. In particular we prove the following theorem:
\begin{thm}\label{maintheorem}Assume \textbf{(A1)-(A3)}and further assume that $S$ is finite.
Then there exists a unique solution $W$ in the class $C_b((0, 1)\times S)\bigcap C^1((0,1)\times S)$ to the equation
\begin{eqnarray*}\begin{cases}\alpha \theta \frac{dW}{d\theta}(\theta, i)=
\displaystyle\inf_{u\in U_i}\biggl[\theta c(i,u) W(\theta,i)+ \sum_{j}\lambda_{ij}(u)W(\theta , j)\biggr]\,\,\mbox{on} \,\, (0,1)\times S\\
\mbox{with}\,\,\displaystyle\lim_{\theta \rightarrow 0}W(\theta,i)=1\,.\end{cases}\end{eqnarray*}\end{thm}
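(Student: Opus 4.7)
\medskip

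\noindent\textbf{Proof proposal.} The idea is to treat $\{W_\epsilon\}_{\epsilon>0}$ from the preceding proposition as an approximating family, extract a subsequential limit $W$ as $\epsilon\downarrow 0$, and verify that $W$ solves \eqref{ihdpe} on $(0,1)\times S$ with the correct boundary behaviour at $\theta=0$. The finiteness of $S$ will be used to convert coordinatewise control into uniform-in-$i$ control and to make the integral form of the HJB equation pass cleanly to the limit; uniqueness will then be immediate from the verification theorem.

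\medskip

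\noindent\emph{Step 1: uniform bounds.} The function $W_\epsilon$ is $C^1$ and bounded on $[\epsilon,1)\times S$, so applying Feynman--Kac along the controlled chain exactly as in the proof of Theorem~\ref{verification}, with terminal time $T_\epsilon:=\alpha^{-1}\log(\theta/\epsilon)$ (the value of $t$ at which $\theta e^{-\alpha t}=\epsilon$), produces the stochastic representation
\[
W_\epsilon(\theta,i)=\inf_{\mathcal{U}}\mathbb{E}_i^{\mathbf{u}}\Bigl[\exp\Bigl(\theta\int_0^{T_\epsilon}e^{-\alpha t}c(X_t,U_t)\,dt\Bigr)h_\epsilon(X_{T_\epsilon})\Bigr].
\]
Since $0\le c\le\|c\|$ and $h_\epsilon=e^{\epsilon\|c\|/\alpha}$, a direct estimate gives
\[
1\le W_\epsilon(\theta,i)\le e^{\theta\|c\|/\alpha}\le e^{\|c\|/\alpha}\qquad\text{for all }\theta\in[\epsilon,1),\ i\in S,
\]
and this bound is independent of $\epsilon$.

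\medskip

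\noindent\emph{Step 2: compactness and passage to the limit.} Feeding the Step~1 bound back into \eqref{dpe1}, and rewriting $\sum_j\lambda_{ij}(u)W_\epsilon(\theta,j)=\sum_{j\neq i}\lambda_{ij}(u)(W_\epsilon(\theta,j)-W_\epsilon(\theta,i))$ (which is at most $2Me^{\|c\|/\alpha}$ in absolute value by \textbf{(A2)}), gives the derivative bound
\[
\Bigl|\tfrac{dW_\epsilon}{d\theta}(\theta,i)\Bigr|\le\tfrac{1}{\alpha a}(\|c\|+2M)e^{\|c\|/\alpha}\qquad\text{for all }\theta\in[a,b]\subset(0,1),\ \epsilon<a.
\]
Because $S$ is finite, $\{W_\epsilon\}$ is uniformly bounded and equi-Lipschitz in $\theta$ on every compact subset of $(0,1)\times S$. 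Arzel\`a--Ascoli applied to each coordinate $i$ separately, followed by a diagonal extraction over the (finitely many) indices, yields $\epsilon_n\downarrow 0$ and $W\in C_b((0,1)\times S)$ with $W_{\epsilon_n}\to W$ uniformly on compact subsets. Writing \eqref{dpe1} in integral form on $[\theta_0,\eta]\subset(0,1)$ and using the elementary bound $|\inf_u f_n-\inf_u f|\le\sup_u|f_n-f|$ together with continuity of $c$ and $\lambda_{ij}$ in $u$, compactness of $U_i$, and the uniform convergence just obtained, the integrand on the right-hand side converges uniformly in $\theta$. Dominated convergence then shows that $W$ satisfies the integral version of \eqref{ihdpe}; differentiating in $\eta$ gives $W\in C^1((0,1)\times S)$ satisfying \eqref{ihdpe} classically.

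\medskip

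\noindent\emph{Step 3: boundary condition, uniqueness, and main obstacle.} The \emph{a priori} envelope $1\le W_\epsilon(\theta,i)\le e^{\theta\|c\|/\alpha}$ is preserved under the limit, so $W(\theta,i)\to 1$ as $\theta\downarrow 0$, and the convergence is automatically uniform in $i$ because $|S|<\infty$. Uniqueness is then immediate from Theorem~\ref{verification}: any bounded $C^1$ solution of \eqref{ihdpe} with this boundary behaviour must equal the value function $W_\alpha$, so in particular any two such solutions agree. The main technical obstacle throughout is the singular factor $\theta$ multiplying $dW/d\theta$: the derivative bound in Step~2 deteriorates like $1/a$ as $a\downarrow 0$, which forces the compactness argument onto compact subsets of $(0,1)\times S$ only and makes the behaviour at $\theta=0$ a separate matter that can only be recovered from the $\epsilon$-uniform stochastic representation of Step~1. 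The hypothesis $|S|<\infty$ is used precisely to make the diagonal Arzel\`a--Ascoli extraction and the uniform-in-$i$ limit $W(\theta,i)\to 1$ both automatic.
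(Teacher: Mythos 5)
Your proposal is correct, and it follows the same overall skeleton as the paper's proof (extract a subsequential limit of the $W_\epsilon$ from the preceding proposition, verify the equation and the boundary condition, and get uniqueness from Theorem~\ref{verification}), but the two key technical steps are implemented quite differently. For equicontinuity, the paper works rather hard: it returns to the stochastic representation, compares $W_\epsilon(\theta+\delta,i)$ with $W_\epsilon(\theta,i)$ by splitting the difference into two terms $I_1,I_2$, and extracts a Lipschitz-in-$\theta$ bound that is uniform in $\epsilon$ \emph{and} in $\theta$ all the way down to $0$; you instead read the derivative bound directly off the ODE \eqref{dpe1}, which is a one-line argument but only gives a bound of order $1/(\alpha a)$ on $[a,b]$. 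Your weaker, locally uniform bound is entirely sufficient for Arzel\`a--Ascoli on compact subsets of $(0,1)\times S$, which is all either proof actually uses, so nothing is lost. For the limit passage, the paper tests against $\varphi\in C^\infty_c(0,1)$, obtains the equation in the sense of distributions, and then upgrades regularity from the continuity of the right-hand side; you pass to the limit in the integral form and differentiate, which is more elementary and equally valid given the uniform-on-compacts convergence. Finally, your treatment of the boundary condition via the squeeze $1\le W_\epsilon(\theta,i)\le e^{\theta\|c\|/\alpha}$ (both bounds independent of $i$ and $\epsilon$) is cleaner and more explicit than the paper's appeal to the extension $\widetilde{W_\epsilon}$, and it delivers the uniformity in $i$ required by Theorem~\ref{verification} directly rather than from finiteness of $S$. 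The only loose end is that your Arzel\`a--Ascoli extraction should formally be run over an exhaustion $[1/m,1-1/m]$ with a diagonal argument, since $W_{\epsilon_n}$ is only defined for $\theta\ge\epsilon_n$; this is routine and is the role played by $\widetilde{W_\epsilon}$ in the paper.
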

\begin{proof} Using Dynkin's formula it can be shown that $W_{\epsilon}$ has the following stochastic representation:
\begin{eqnarray*}W_{\epsilon}(\theta,i)=\displaystyle\inf_{\mathcal{U}}
\mathbb{E}^{\textbf{u}}_i\biggl[h_{\epsilon}\exp\biggl(\theta \int_0^{T_{\epsilon}}e^{-\alpha t}c(X_t,U_t)dt\biggr)\biggr]
\end{eqnarray*}where $h_{\epsilon}$ is as in \eqref{dpe} and $T_{\epsilon}=\inf \{t\geq 0:\theta_t=\epsilon\}$,
i.e., $T_{\epsilon}=\frac{\log(\frac{\theta}{\epsilon})}{\alpha}$.
From this representation of $W_{\epsilon}$ we can deduce that for every $\epsilon > 0$
\begin{eqnarray*}0\leq W_{\epsilon}(\theta,i)\leq e^{\frac{\theta}{\alpha}||c||}\leq e^{\frac{||c||}{\alpha}}\,.
\end{eqnarray*}
Now we show that $\frac{dW_{\epsilon}}{d \theta}$ is also uniformly (in $\epsilon >0$) bounded.
For any arbitrary Markov control $\textbf{u}$,
\begin{align*}\bigg{|}&\mathbb{E}_i^{\textbf{u}}\bigl[h_{\epsilon}\exp\bigl((\theta + \delta)
\int_{0}^{T_{\epsilon}^{\delta}}e^{-\alpha t}c(X_t,U_t)dt\bigr)\bigr]-\mathbb{E}_i^{\textbf{u}}
\bigl[h_{\epsilon}\exp\bigl(\theta \int_{0}^{T_{\epsilon}}e^{-\alpha t}c(X_t,U_t)dt\bigr)\bigr]\bigg{|}\\
&\leq I_1+I_2
\end{align*}where $(\theta + \delta)e^{-\alpha T_{\epsilon}^{\delta}}=\epsilon$ and
{\small\begin{eqnarray*}I_1=\bigg{|}\mathbb{E}_i^{\textbf{u}}\bigl[h_{\epsilon}
\exp\bigl((\theta + \delta)\int_{0}^{T_{\epsilon}^{\delta}}e^{-\alpha t}c(X_t,U_t)dt\bigr)\bigr]-
\mathbb{E}_i^{\textbf{u}}\bigl[h_{\epsilon}\exp\bigl(\theta \int_{0}^{T_{\epsilon}^{\delta}}e^{-\alpha t}c(X_t,U_t)dt\bigr)\bigr]\bigg{|}\,,
\end{eqnarray*}}
{\small \begin{eqnarray*}I_2=\bigg{|}\mathbb{E}_i^{\textbf{u}}
\bigl[h_{\epsilon}\exp\bigl(\theta \int_{0}^{T_{\epsilon}^{\delta}}e^{-\alpha t}c(X_t,U_t)dt\bigr)\bigr]
-\mathbb{E}_i^{\textbf{u}}\bigl[h_{\epsilon}\exp\bigl(\theta \int_{0}^{T_{\epsilon}}e^{-\alpha t}c(X_t,U_t)dt\bigr)\bigr]\bigg{|}\,.
\end{eqnarray*}}
Now \begin{align*}I_1&\leq e^{\frac{||c||}{\alpha}}\mathbb{E}_i^{\textbf{u}} \biggl[\exp\bigl(\theta \int_{0}^{T_{\epsilon}^{\delta}}e^{-\alpha t}c(X_t,U_t)dt\bigr)\bigg{|}\exp\bigl(\delta \int_{0}^{T_{\epsilon}^{\delta}}e^{-\alpha t}c(X_t,U_t)dt\bigl)-1\bigg{|}\biggr]\\&\leq C_1e^{\frac{2||c||}{\alpha}}\delta \frac{||c||}{\alpha}\end{align*}for some constant $C_1>0$ and for $\delta >0$ small enough.
Similarly for $I_2$ we have
\begin{align*}I_2 &\leq e^{\frac{||c||}{\alpha}}\mathbb{E}_i^{\textbf{u}} \biggl[\exp\bigl(\theta \int_{0}^{T_{\epsilon}}e^{-\alpha t}c(X_t,U_t)dt\bigr)
\bigg{|}\exp\bigl(\theta \int_{T_{\epsilon}}^{T_{\epsilon}^{\delta}}e^{-\alpha t}c(X_t,U_t)dt\bigr)-1\bigg{|}\biggr]\\
&\leq e^{\frac{2||c||}{\alpha}}\biggl[e^{\frac{\theta ||c||}{\alpha}\bigl(e^{-\alpha T_{\epsilon}}-e^{-\alpha T_{\epsilon}^{\delta}}\bigr)}-1\biggr]\,.
\end{align*}
But $\theta e^{-\alpha T_{\epsilon}}-\theta e^{-\alpha T_{\epsilon}^{\delta}}=\delta e^{-\alpha T_{\epsilon}^{\delta}}=\frac{\epsilon \delta}{\theta + \delta }$ .
Hence we have
\begin{align*}I_2 &\leq e^{\frac{2||c||}{\alpha}}\biggl[e^{\frac{\theta ||c||}{\alpha}\frac{\epsilon \delta}{\theta + \delta }}-1\biggr]\\
&\leq C_2e^{\frac{2||c||}{\alpha}}\delta \frac{||c||}{\alpha}\end{align*} for some constant $C_2>0$ and for $\delta>0$ small enough.
Hence we can conclude that for $\delta > 0$, small enough
$$\big{|}W_{\epsilon}(\theta + \delta,i)- W_{\epsilon}(\theta,i)\big{|}\leq C_{3}e^{\frac{2||c||}{\alpha}}\delta \frac{||c||}{\alpha}$$for some constant $C_3>0$.

\noindent Similarly for $\delta < 0$, small enough, we can get an estimate of the type
$$\big{|}W_{\epsilon}(\theta + \delta,i)- W_{\epsilon}(\theta,i)\big{|}\leq C_{3}e^{\frac{2||c||}{\alpha}}|\delta| \frac{||c||}{\alpha}\,.$$
Thus we get that $\frac{d W_{\epsilon}}{d \theta}$ is uniformly bounded in $\epsilon >0$.

\noindent Now define
\begin{eqnarray*}\widetilde{W_{\epsilon}}(\theta,i)=\begin{cases} W_{\epsilon}(\theta ,i) \,\,\mbox{for}\,\,\theta > \epsilon\\
h_{\epsilon}(i) \, \, \mbox{for}\,\,\theta \leq \epsilon\,.
\end{cases}
\end{eqnarray*}
Then $\widetilde{W_{\epsilon}}$ satisfies the same bounds. Now since $\widetilde{W_{\epsilon}}$ is
uniformly bounded and $\frac{d\widetilde{W_{\epsilon}}}{d \theta}$ is also uniformly bounded,
by Ascoli - Arzela theorem there exists a function $W$ in $C_b((0,1)\times S)$ and a sequence
$\epsilon _n \rightarrow 0$ such that $\widetilde{W_{\epsilon_n}}\rightarrow W$ uniformly over
compact subsets of $(0,1)\times S$. Also by the definition of $\widetilde{W_{\epsilon}}$, $W(\theta,i)\rightarrow 1$ as $\theta \rightarrow 0$.
Now taking $\varphi \in C_c^{\infty}(0,1)$ we get
\begin{align*}&-\int_{0}^{1}\alpha \widetilde{W_{\epsilon_n}}\frac{d(\theta \varphi)}
{d \theta}d \theta =\int_0^{1}\alpha \theta \frac{d\widetilde{W_{\epsilon_n}}}
{d \theta}\varphi(\theta)d\theta \\&=\int_{0}^1\displaystyle\inf_{u\in U_i}\biggl[\theta c(i,u)
\widetilde{W_{\epsilon_n}}(\theta,i)+ \sum_{j}\lambda_{ij}(u)\widetilde{W_{\epsilon_n}}(\theta , j)\biggr]
\varphi(\theta)d \theta\\&-\int_{0}^{\epsilon_n}\inf_{u\in U_i}\biggl[\theta c(i,u) \widetilde{W_{\epsilon_n}}
(\theta,i)+ \sum_{j}\lambda_{ij}(u)\widetilde{W_{\epsilon_n}}(\theta , j)\biggr]\varphi(\theta)d \theta \,.
\end{align*}
Now taking limit $n \rightarrow \infty$ we get
\begin{align*}-\int_{0}^{1}\alpha W \frac{d (\theta \varphi)}
{d \theta}d \theta= \int_{0}^1\displaystyle\inf_{u\in U_i}\biggl[\theta c(i,u) W(\theta,i)+
\sum_{j}\lambda_{ij}(u)W(\theta , j)\biggr]\varphi(\theta)d \theta\,.
\end{align*}
Thus
\begin{eqnarray*}\alpha \theta \frac{d W}{d \theta}=\inf_{u\in U_i}\biggl[\theta c(i,u) W(\theta,i)+
\sum_{j}\lambda_{ij}(u)W(\theta , j)\biggr]\end{eqnarray*} in the sense of distribution.
But by our assumptions the righthand side is a continuous function. Therefore $\frac{d W}{d \theta}$ is in $C((0,1)\times S)$.
Thus $W$ is a smooth solution to the HJB equation {\eqref{dpe}}. The uniqueness follows from Theorem \ref{verification}.
\end{proof}
This immediately yields the following result:
\begin{thm}Assume \textbf{(A1)-(A3)} and that $S$ is finite.Then the value function $V_{\alpha}$  as in \eqref{infinitehorizon}
is the unique solution in $C_b((0, 1)\times S)\bigcap C^1((0,1)\times S)$ to
\begin{align*}&\alpha \theta \bigl[v + \theta \frac{d v}{d \theta}\bigr]e^{\theta v}
=\displaystyle \inf_{u\in U_i}\bigl[\theta c e^{\theta v}+ \sum_{j}\lambda_{ij}(u)
e^{\theta v(\theta,j)}\bigr] \, \mbox{on} \, (0,1)\times S\,\\ &\mbox{with} \,\,
\lim_{\theta \rightarrow 0}v(\theta,i)= \inf_{\mathcal{U}} \mathbb{E}_i^{\textbf{u}}
\int_0^{\infty}e^{-\alpha t}c(X_t,U_t)dt \,.\end{align*} An optimal control is given by the Markov control
$U_t=\textbf{u}^*(\theta e^{-\alpha t},X_{t-})$ where $\textbf{u}^*$ is given by
\begin{eqnarray*}  \displaystyle\inf_{u\in U_i}\bigl[\theta c(i,u)
e^{\theta V_{\alpha}}+ \sum_{j}\lambda_{ij}(u)e^{\theta V_{\alpha}(\theta,j)}\bigr]=\\
 \bigl[\theta c(i,\textbf{u}^*(\theta,i)) e^{\theta V_{\alpha}}+ \sum_{j}\lambda_{ij}
 (\textbf{u}^*(\theta,i))e^{\theta V_{\alpha}(\theta,j)}\bigr]\,.\end{eqnarray*}
\end{thm}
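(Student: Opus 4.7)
The plan is to transform the HJB equation \eqref{ihdpe} for $W_{\alpha}$ into the stated equation for $V_{\alpha}$ via the substitution $w = e^{\theta v}$, and then to identify the boundary behaviour of $v$ as $\theta \to 0$.

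First, I would take $W$ to be the unique solution of \eqref{ihdpe} in $C_b((0,1)\times S)\cap C^1((0,1)\times S)$ produced by Theorem \ref{maintheorem}, which by Theorem \ref{verification} coincides with $W_{\alpha}$. The stochastic representation $W_{\alpha}(\theta,i) = \inf_{\mathcal{U}} \mathbb{E}_i^{\textbf{u}} \exp\bigl(\theta \int_0^\infty e^{-\alpha t} c(X_t,U_t)\,dt\bigr)$ immediately gives $1 \le W \le e^{||c||/\alpha}$, so $v(\theta,i) := \theta^{-1}\log W(\theta,i)$ is well defined, bounded, and $C^1$ in $\theta$ on $(0,1)\times S$. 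Differentiating $W = e^{\theta v}$ yields $\frac{dW}{d\theta} = \bigl(v + \theta \tfrac{dv}{d\theta}\bigr) e^{\theta v}$, and substituting into \eqref{ihdpe} produces the displayed PDE for $v$.

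The delicate step is the boundary condition, since it formally requires interchanging the infimum over controls with the limit $\theta \to 0$. For an arbitrary Markov control set $Y^{\textbf{u}} := \int_0^\infty e^{-\alpha t} c(X_t,U_t)\,dt$, which satisfies $0 \le Y^{\textbf{u}} \le ||c||/\alpha$ almost surely. The elementary inequality $|e^{\theta y} - 1 - \theta y| \le \tfrac{1}{2}\theta^2 y^2 e^{\theta y}$ for $y \ge 0$ gives
\[
\bigl|\mathbb{E}_i^{\textbf{u}} e^{\theta Y^{\textbf{u}}} - 1 - \theta\, \mathbb{E}_i^{\textbf{u}} Y^{\textbf{u}}\bigr| \le C \theta^2
\]
with a constant $C$ independent of $\textbf{u}$ and $i$. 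Dividing by $\theta$ and taking infima over $\mathcal{U}$ yields $\theta^{-1}(W(\theta,i) - 1) \to \inf_{\mathcal{U}} \mathbb{E}_i^{\textbf{u}} Y^{\textbf{u}}$ as $\theta \to 0$. Writing $v = \theta^{-1}\log W = \theta^{-1}(W-1) + O\bigl(\theta^{-1}(W-1)^2\bigr) = \theta^{-1}(W-1) + O(\theta)$ and using $W - 1 = O(\theta)$ shows $v$ has the same limit, establishing the stated boundary value.

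Uniqueness of $v$ in the prescribed class follows by running the transformation in reverse: any $\tilde v$ satisfying the stated PDE with the prescribed boundary limit yields $\tilde w := e^{\theta \tilde v}$, which is a smooth, bounded, positive solution of \eqref{ihdpe} with $\tilde w(\theta,i) \to 1$ uniformly in $i$, so $\tilde w = W_{\alpha}$ by Theorem \ref{verification}, and hence $\tilde v = v = V_{\alpha}$ by the Corollary following Theorem \ref{verification}. For the optimal control, substituting $w = e^{\theta V_{\alpha}}$ into \eqref{func} identifies the minimiser $\textbf{u}^*$ there with the one appearing in the present statement, and Theorem \ref{verification} then guarantees that $U_t = \textbf{u}^*(\theta e^{-\alpha t}, X_{t-})$ is optimal. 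The only step that is not essentially bookkeeping is the boundary identification, and the uniform bound $Y^{\textbf{u}} \le ||c||/\alpha$ makes the required interchange routine.
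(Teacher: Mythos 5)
Your proposal is correct and follows essentially the route the paper intends: the paper states this theorem as an immediate consequence of Theorem \ref{maintheorem}, the verification theorem and its corollary, via the substitution $w=e^{\theta v}$, and offers no further proof. Your careful justification of the boundary condition $\lim_{\theta\to 0}v(\theta,i)=\inf_{\mathcal U}\mathbb E_i^{\mathbf u}\int_0^\infty e^{-\alpha t}c\,dt$ (using the uniform bound $0\le Y^{\mathbf u}\le \|c\|/\alpha$ to control the error uniformly over controls) is a detail the paper omits entirely, and it is exactly the right way to fill that gap.
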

\begin{rem} The finiteness of the state space in Theorem
\ref{maintheorem} is forced upon by the uniformity in the boundary
condition in \ref{verification}. Note that the limiting procedure
that we have employed only yields that $\lim_{\theta \rightarrow
0}W(\theta,i)=1$ for each $i$. Hence the finiteness assumption on
$S$. Note that a similar situation arises in the risk-sensitive
control of diffusion processes \cite{Menaldi}. In \cite{Menaldi}
the authors treat periodic diffusions for which the state space is
a torus which is compact.
\end{rem}
\section{\textbf{Infinite Horizon Average Cost}}
In this section we study the infinite horizon average cost case. In order to study the average cost case we make some further assumptions on our model.

\noindent \textbf{(A4)} For every stationary control $\textbf{u}$, the corresponding Markov chain is irreducible.

\noindent \textbf{(A5)} There exists a Lyapunov function $V : S
\rightarrow \mathbb{R}^+$, an unbounded function \\ $W : S
\rightarrow [1, \infty)$ and constants $\delta > 0$ and $b <
\infty$ such that
\begin{eqnarray}\label{lyapunov}e^{-V(i)}\sum _j \lambda_{ij}(u)e^{V(j)} \leq -\delta W(i) + b 1_{\{0\}}(i) \,\, \mbox{for~all} \,\, i,u.\end{eqnarray}
An important consequence of \textbf{(A5)} is the following lemma:
\begin{lem}\label{drift} Let $\eta < \delta$ and \begin{eqnarray}\label{hitting}\tau_0 = \inf\{t>0 : X_t = 0\}\,.\end{eqnarray} Then
$$\sup _{\textbf{u}}\mathbb{E}_i^{\textbf{u}}e^{\eta \tau_0} \leq e^{V(i)} .$$
\end{lem}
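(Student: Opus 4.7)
The plan is to apply a Foster--Lyapunov / Dynkin argument using $e^{V(\cdot)}$ as the multiplicative Lyapunov function and $\eta < \delta$ to absorb the drift into a supermartingale inequality.

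Set $\phi(i) := e^{V(i)}$, noting $\phi \geq 1$ since $V \geq 0$. The Lyapunov inequality (A5) is exactly the statement $\Lambda^{\mathbf{u}}\phi(i) = \sum_j \lambda_{ij}(\mathbf{u}(t,i)) e^{V(j)} \leq e^{V(i)}\bigl(-\delta W(i) + b\mathbf{1}_{\{0\}}(i)\bigr)$, and because $W \geq 1$, this gives for every $i \neq 0$ and every control value the pointwise bound
\[
\eta\phi(i) + \Lambda^{\mathbf{u}}\phi(i) \leq (\eta - \delta)\phi(i) \leq 0,
\]
which is the place where the hypothesis $\eta < \delta$ is used. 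This is the drift inequality that will be exponentially upgraded through $\tau_0$.

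Next I would define $\zeta_t := e^{\eta(t \wedge \tau_0)}\phi(X_{t \wedge \tau_0})$ and argue, via Dynkin's formula, that $\zeta_t$ is a supermartingale. The obstruction is that $\phi = e^V$ need not lie in $B(S)$, so the martingale problem characterization from the preliminaries does not apply directly. I would circumvent this by localizing with $\sigma_n := \inf\{t \geq 0 : V(X_t) \geq n\}$. Because (A2) rules out explosion, in any finite time interval the path visits only finitely many states, so $\sup_{s \leq t} V(X_s) < \infty$ for each $t$, giving $\sigma_n \uparrow \infty$ almost surely. On $[0, \sigma_n]$ the function $\phi$ is bounded by $e^n$, so Dynkin's formula applied to the (bounded) function $\phi \wedge e^n$ together with the drift bound above yields
\[
\mathbb{E}_i^{\mathbf{u}}\bigl[\zeta_{t \wedge \sigma_n}\bigr] \leq \phi(i) = e^{V(i)}.
\]

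Finally, since $\phi \geq 1$, this implies $\mathbb{E}_i^{\mathbf{u}}\bigl[e^{\eta(t \wedge \tau_0 \wedge \sigma_n)}\bigr] \leq e^{V(i)}$. Sending $n \to \infty$ (monotone convergence, using $\sigma_n \uparrow \infty$ a.s.) and then $t \to \infty$ (monotone convergence again) gives $\mathbb{E}_i^{\mathbf{u}}[e^{\eta \tau_0}] \leq e^{V(i)}$; as a byproduct, $\tau_0 < \infty$ a.s. Since the bound $e^{V(i)}$ does not depend on $\mathbf{u}$, taking the supremum over controls finishes the proof. The main technical obstacle is the localization step: the Dynkin calculus as set up in the preliminaries is valid for bounded $f$, and the essential trick is that the drift bound is already nonpositive on $S \setminus \{0\}$, so cutting off $\phi$ above produces only negative error terms and the supermartingale inequality survives the passage to the limit.
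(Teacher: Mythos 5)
Your proof is correct and follows essentially the same route as the paper: both convert \textbf{(A5)} (using $W\geq 1$) into the drift bound $(\Lambda^{\mathbf u}+\eta)e^{V}\leq(\eta-\delta)e^{V}\leq 0$ off the state $0$, apply Dynkin's formula to $e^{\eta(t\wedge\tau_0)}e^{V(X_{t\wedge\tau_0})}$ with a localizing sequence, use $e^{V}\geq 1$ to discard the terminal term, and pass to the limit. Your localization via $\sigma_n=\inf\{t:V(X_t)\geq n\}$ is just a cleaner version of the paper's $\tau_0\wedge\tau_n\wedge n$ truncation; no substantive difference.
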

\begin{proof}Let $$\tau_n = \sup\{t\geq 0 : X_t \leq n\}\,.$$If $X_0 \geq n$, then $\tau_n$ is $0$.
Assumption \textbf{(A5)} implies that there exists a $\tilde{b}$ and a $\widetilde{V}= e^{V} $ such that
\begin{eqnarray*}\sum _j \lambda_{ij}(u)\widetilde{V}(j) \leq -\delta \widetilde{V}(i) + \tilde{b} 1_{\{0\}}(i)\,.
\end{eqnarray*}
By Dynkin's formula we get
\begin{align*}\mathbb{E}_i^{\textbf{u}} e^{\eta(\tau_0 \wedge \tau_n \wedge n)}
\widetilde{V}(X_{\tau_0 \wedge \tau_n \wedge n}) &= \widetilde{V}(i) + \mathbb{E}_i^{\textbf{u}}
\int _0 ^{\tau_0 \wedge \tau_n \wedge n} e^{\eta s}(\Lambda^{\textbf{u}}+\eta)\widetilde{V}(X_s)ds\\
&\leq \widetilde{V}(i) + \mathbb{E}_i^{\textbf{u}}\int _0 ^{\tau_0 \wedge \tau_n \wedge n} e^{\eta s}(\eta - \delta)\widetilde{V}(X_s)ds\,.
\end{align*}
Thus we have
\begin{align*}\widetilde{V}(i) &\geq \mathbb{E}_i^{\textbf{u}} e^{\eta(\tau_0 \wedge \tau_n \wedge n)}
\widetilde{V}(X_{\tau_0 \wedge \tau_n \wedge n}) + \mathbb{E}_i^{\textbf{u}}\int _0 ^{\tau_0 \wedge \tau_n \wedge n}
e^{\eta s}(-\eta + \delta)\widetilde{V}(X_s)ds\\
&\geq \mathbb{E}_i^{\textbf{u}} e^{\eta(\tau_0 \wedge \tau_n \wedge n)}\,.
\end{align*}
Now letting $n\rightarrow \infty$ we get the desired result.
\end{proof}
Finally we make the following assumption

\noindent \textbf{(A6)} For $\tau_0$ as defined in \eqref{hitting}, $\displaystyle\sup_{i, \textbf{u}}\mathbb{E}_i^{\textbf{u}}\tau_0 < \infty$ .

\begin{rem} If the state space is finite then it can be easily seen that \textbf{(A5)} implies \textbf{(A6)}.
\end{rem}
Now we state and prove the main theorem of this section:
\begin{thm}Under \textbf{(A1)}-\textbf{(A6)},
an optimal control for the risk-sensitive average cost criterion exists for
$\theta$ and $c$ satisfying $\theta ||c|| < \delta$ where $\delta$ is as in $\eqref{lyapunov}$.
\end{thm}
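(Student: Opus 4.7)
The plan is to produce a pair $(\rho^*,\phi^*)$ with $\phi^*:S\to(0,\infty)$, $\phi^*(0)=1$, and $\rho^*\in\mathbb{R}$ solving the multiplicative Poisson HJB equation
\begin{equation*}
\rho^*\phi^*(i)\;=\;\inf_{u\in U_i}\Bigl[\theta c(i,u)\phi^*(i) + \sum_{j}\lambda_{ij}(u)\phi^*(j)\Bigr], \qquad i\in S,
\end{equation*}
and then to identify $\rho^*$ with $\inf_{\mathbf{u}\in\mathcal{U}_s} J^{\mathbf{u}}(i)$ via a Feynman--Kac/Dynkin verification, obtaining the optimal stationary control by measurable selection in the infimum above.

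I would first approximate by finite-state problems. For each $N$ set $S_N=\{0,1,\ldots,N\}$ and modify the rates to confine the chain to $S_N$ (for instance, redirect the mass leaving $S_N$ back to the current state). Restricted to $S_N$, the displayed HJB equation is a finite-dimensional semilinear Perron--Frobenius eigenvalue problem and admits a positive solution $(\rho_N,\phi_N)$ with $\phi_N(0)=1$ by classical finite-state risk-sensitive MDP theory (cf.\ the discrete-time analogues in \cite{Borkar,Hernandez}); a measurable selector $\mathbf{u}_N$ attains the infimum and satisfies $\rho_N=J^{\mathbf{u}_N}$ on $S_N$. The associated Feynman--Kac representation
\begin{equation*}
\phi_N(i)\;=\;\mathbb{E}_i^{\mathbf{u}_N}\!\Bigl[\exp\bigl(\theta\!\int_0^{\tau_0}\! c(X_s,\mathbf{u}_N(X_s))\,ds-\rho_N\tau_0\bigr)\Bigr],
\end{equation*}
combined with Lemma \ref{drift} applied to $\widetilde{V}=e^V$ from \textbf{(A5)}, gives $\phi_N(i)\le e^{V(i)}$ uniformly in $N$; this is precisely where the strict inequality $\theta\|c\|<\delta$ enters, so that the Lyapunov drift still dominates the exponential twist by the running cost. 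One also has $|\rho_N|\le\theta\|c\|$.

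A diagonal extraction produces $N_k\to\infty$ with $\rho_{N_k}\to\rho^*$ and $\phi_{N_k}(i)\to\phi^*(i)$ for every $i$, so that $\phi^*(0)=1$ and $0<\phi^*\le e^V$. Continuity of $\lambda_{ij}(\cdot)$ and $c(i,\cdot)$ on compact $U_i$, together with \textbf{(A3)} and a Lyapunov dominated-convergence bound controlling the tail of $\sum_j\lambda_{ij}(u)\phi_{N_k}(j)$, lets me pass to the limit in the HJB equations on $S_{N_k}$ and obtain the displayed equation for $(\rho^*,\phi^*)$ on all of $S$. A measurable selection theorem as in \cite{benes} then yields $\mathbf{u}^*\in\mathcal{U}_s$ attaining the pointwise infimum. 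For the identification step, Dynkin's formula applied to $\exp\bigl(\theta\!\int_0^t c(X_s,\mathbf{u}(X_s))ds-\rho^* t\bigr)\phi^*(X_t)$ gives
\begin{equation*}
\phi^*(i)\;\le\;\mathbb{E}_i^{\mathbf{u}}\!\Bigl[\exp\bigl(\theta\!\int_0^t\! c(X_s,\mathbf{u}(X_s))\,ds-\rho^* t\bigr)\phi^*(X_t)\Bigr]
\end{equation*}
for arbitrary $\mathbf{u}\in\mathcal{U}_s$, with equality at $\mathbf{u}^*$; taking logarithms, dividing by $t$ and letting $t\to\infty$ yields $J^{\mathbf{u}}(i)\ge\rho^*$ with equality at $\mathbf{u}^*$.

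The main obstacle is the last step: showing that the boundary term $\tfrac{1}{t}\log\mathbb{E}_i^{\mathbf{u}}\phi^*(X_t)$ vanishes uniformly across stationary controls. The bound $\phi^*\le e^V$, Lemma \ref{drift}, and assumption \textbf{(A6)} (uniform finiteness of the mean hitting time of $0$) are tailored precisely for this, and each estimate makes essential use of the strict inequality $\theta\|c\|<\delta$; this is why the theorem is stated under that hypothesis.
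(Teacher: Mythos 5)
Your route is genuinely different from the paper's, and the difference matters: it leads you straight into a difficulty that the authors themselves flag as open. The paper never solves the average-cost HJB equation (the equation with $\inf_{u\in U_i}$) on the countable state space. Instead it fixes each stationary control $\mathbf{u}$, invokes the multiplicative ergodic theorems of Kontoyiannis and Meyn to obtain a principal eigenpair $(\rho^{\mathbf{u}},h^{\mathbf{u}})$ for the single-control Poisson equation \eqref{Poisson eqn}, derives the uniform bounds $\epsilon\le h^{\mathbf{u}}\le e^{V}$ from Lemma \ref{drift}, Jensen's inequality and \textbf{(A6)}, sets $\rho^{*}=\inf_{\mathbf{u}}\rho^{\mathbf{u}}$, and passes to a limit $(\mathbf{u}^{*},h^{*})$ along a minimizing sequence of controls using Fatou's lemma. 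Because $\rho^{*}$ is by construction already a lower bound for every $\rho^{\mathbf{u}}$, only one direction of verification is needed, namely $\rho^{\mathbf{u}^{*}}\le\rho^{*}$, and that direction uses only the uniform \emph{lower} bound $h^{*}\ge\epsilon$; the unboundedness of $h^{*}$ never enters. The second remark after Theorem \ref{algo} states explicitly that producing a solution of \eqref{acdpe} on a countably infinite state space with the boundedness needed for verification is precisely what the authors could not do.

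In your scheme $\rho^{*}$ is the limit of truncated eigenvalues, so you must prove \emph{both} that $\rho^{*}$ bounds $J^{\mathbf{u}}$ from below for every stationary $\mathbf{u}$ and that it is attained at $\mathbf{u}^{*}$. The first is exactly the step you call ``the main obstacle,'' and the tools you list do not close it: the subsolution direction of Dynkin's formula only gives
$e^{\rho^{*}t}\phi^{*}(i)\le\mathbb{E}_{i}^{\mathbf{u}}\bigl[\exp\bigl(\theta\int_{0}^{t}c(X_s,\mathbf{u}(X_s))\,ds\bigr)\phi^{*}(X_{t})\bigr]$,
and since $\phi^{*}$ is bounded only by the unbounded function $e^{V}$, you cannot discard $\phi^{*}(X_{t})$ from the right-hand side without a multiplicative moment estimate of the form $\mathbb{E}_{i}^{\mathbf{u}}\bigl[\exp\bigl(\theta\int_{0}^{t}c\,ds\bigr)e^{V(X_{t})}\bigr]\le C\,\mathbb{E}_{i}^{\mathbf{u}}\bigl[\exp\bigl(\theta\int_{0}^{t}c\,ds\bigr)\bigr]$ uniformly in $t$ and $\mathbf{u}$; Lemma \ref{drift} and \textbf{(A6)} concern the hitting time of $0$, not this time-$t$ marginal, and do not supply it. Two further gaps: (i) the uniform-in-$N$ bound $\phi_{N}\le e^{V}$ requires the Lyapunov inequality \eqref{lyapunov} and the analogue of \textbf{(A6)} to hold for the \emph{truncated} rates, which is not automatic for a reflection-type truncation unless, say, $V$ is monotone — Lemma \ref{drift} as stated applies to the original chain; and (ii) a pointwise limit of positive functions need not be positive, so the assertion $\phi^{*}>0$ (needed for the verification at $\mathbf{u}^{*}$) requires a uniform lower bound $\phi_{N}\ge\epsilon$ that you never establish. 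The repair is essentially to reorganize along the paper's lines: define $\rho^{*}$ as the infimum over controls of per-control eigenvalues so that only the verification direction using the lower bound on the eigenfunction is ever needed.
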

\begin{proof}Let \begin{eqnarray}\label{eigenvalue}\theta \rho^{\textbf{u}}=
\lim _{T \rightarrow \infty}\frac{1}{T}\log \mathbb{E}_{i}^{\textbf{u}}
\biggl[\exp\biggl(\theta \int_0^T c(X_t,\textbf{u}(X_t))dt\biggr)\biggr]\,. \end{eqnarray}
The existence of the above limit follows from the multiplicative ergodic theorems proved in \cite{Meyn1} and \cite{Meyn2} .
Moreover it also follows from the results in \cite{Meyn1} and \cite{Meyn2} that the limit in \eqref{eigenvalue} is the
principal eigenvalue for the operator $\Lambda^{\textbf{u}}+ \theta c$ and has a positive eigenfunction which
belongs to the class $L^{\infty}_{\widetilde{V}}$, i.e., if we denote an eigenfunction by $h^{\textbf{u}}$
then $\sup\frac{|h^{\textbf{u}}(i)|}{\widetilde{V}(i)}< \infty$. Thus the following equation holds
\begin{eqnarray}\label{Poisson eqn}\sum_{j}\lambda_{ij}(\textbf{u}(i))h^{\textbf{u}}(j) + \theta c(i,\textbf{u}(i))h^{\textbf{u}}(i)
 = \rho^{\textbf {u}} \theta h^{\textbf{u}}(i)\,.
\end{eqnarray}Equation \eqref{Poisson eqn} is referred to as the Poisson equation.
Now it is clear that if $h^{\textbf{u}}$ satisfies \eqref{Poisson eqn} then so does any scalar multiple
of $h^{\textbf{u}}$. Therefore without any loss of generality we may assume that $h^{\textbf{u}}(0)=1$.
With this restriction, using Dynkin's formula and the fact that $h^{\textbf{u}}$ satisfies \eqref{Poisson eqn}
we get the following stochastic representation for $h^{\textbf{u}}$:
\begin{align}h^{\textbf{u}}(i)=\mathbb{E}_i^{\textbf{u}}\biggl[\exp\biggl(\theta \int _0^{\tau_0}
(c(X_s,\textbf{u}(X_s))-\rho^{\textbf{u}})ds\biggr)\biggr]\,.
\end{align}
Now using the stochastic representation of $h^{\textbf{u}}$ we
derive bounds on $h^{\textbf{u}}$. First we derive an upper bound. We have
\begin{align*}h^{\textbf{u}}(i)&=\mathbb{E}_i^{\textbf{u}}
\biggl[\exp\biggl(\theta \int _0^{\tau_0}(c(X_s,\textbf{u}(X_s))-\rho^{\textbf{u}})ds\biggr)\biggr]\\
&\leq \mathbb{E}_i^{\textbf{u}}e^{\theta ||c|| \tau_0}\\
&\leq e^{V(i)}
\end{align*}
by Lemma \ref{drift}, provided $\theta ||c|| < \delta$.\\ This upper bound shows that bound on $h^{\textbf{u}}$ is uniform in $\textbf{u}$.
Next we obtain a lower bound. We have
\begin{align*}h^{\textbf{u}}(i)&=\mathbb{E}_i^{\textbf{u}}
\biggl[\exp\biggl(\theta \int _0^{\tau_0}(c(X_s,\textbf{u}(X_s))-\rho^{\textbf{u}})ds\biggr)\biggr]\\
& \geq \exp \biggl\{\mathbb{E}_i^{\textbf{u}}\theta \int _0^{\tau_0}(c(X_s,\textbf{u}(X_s))-\rho^{\textbf{u}})ds\biggr\}\\
&\geq \exp(-\theta \rho^{\textbf{u}}\mathbb{E}_i^{\textbf{u}}\tau_0)\\
&\geq \exp(-\theta ||c||\mathbb{E}_i^{\textbf{u}}\tau_0)\\
&> \epsilon
\end{align*}for some $\epsilon > 0$. In the above sequence of
inequalities the second one follows from Jensen's inequality and the last one follows from \textbf{(A6)}.\\
 Let \begin{eqnarray}\label{optimal}\rho^*=\displaystyle\inf_{\textbf{u}}\rho^{\textbf{u}}\,.\end{eqnarray}
 Next we show that there exists a control $\textbf{u}^*$ which attains the infimum in \eqref{optimal}.
 From \eqref{optimal} it follows that there exists a sequence $\textbf{u}_n$ such that $\rho^{\textbf{u}_n}\rightarrow \rho^*$.
 Since each $U_i$ is compact there exists a subsequence which is also denoted by $\textbf{u}_n$ again and a $\textbf{u}^*$ such that
$$\textbf{u}_n\rightarrow \textbf{u}^*\,\,\mbox{pointwise}\,.$$
Again since $h^{\textbf{u}_n}$ is pointwise bounded, there exists a subsequence which we call $h^{\textbf{u}_n}$ again such that
$$h^{\textbf{u}_n}(i)\rightarrow h^*(i) \,\, \mbox{for~each}\,\,i\,,$$ for some $h^*$ and $\displaystyle\inf_i h^*(i)\geq \epsilon$.
Therefore by using Fatou's lemma we have
\begin{align*}\sum_{j\neq i}\lambda_{ij}(\textbf{u}^*(i))h^*(j)&\leq \liminf_{n\rightarrow \infty}
\sum_{j\neq i}\lambda_{ij}(\textbf{u}_n(i))h^{\textbf{u}_n}(j)\\
&=\liminf_{n \rightarrow \infty}[-\lambda_{ii}(\textbf{u}_n(i))
h^{\textbf{u}_n}(i)-\theta c(i,\textbf{u}_n(i))h^{\textbf{u}_n}(i)+ \theta \rho^{\textbf{u}_n}h^{\textbf{u}_n}(i)]\\
&=-\lambda_{ii}(\textbf{u}^*(i))h^*(i)-\theta c(i,\textbf{u}^*(i))h^*(i)+ \theta \rho^*h^*(i)\,.
\end{align*}
Thus we get
\begin{align*}\sum_j\lambda_{ij}(\textbf{u}^*(i))h^*(j)+\theta c(i,\textbf{u}^*(i))h^*(i)\leq \theta \rho^*h^*(i)\,.
\end{align*}
Now we claim that $$\rho^* = \rho^{\textbf{u}^*}\,.$$
Indeed, with $\tau_n$ as in the proof of Lemma \ref{drift} we get from Dynkin's formula
\begin{align*}\mathbb{E}_i^{\textbf{u}^*}&\biggl[\exp\biggl(\theta \int_0^{T\wedge \tau_n}c(X_s,\textbf{u}^*(X_s))ds\biggr)
h^*(X_{T\wedge\tau_n})\biggr]\\&=h^*(i)+\mathbb{E}_i^{\textbf{u}^*}\biggl[\int_0^{T\wedge\tau_n}
\exp\bigl(\theta \int_0^tc(X_s, \textbf{u}^*(X_s))ds\bigr)(\Lambda^{\textbf{u}^*}+\theta c)h^*(X_t)dt\biggr]\\
&\leq h^*(i) + \theta \rho^* \mathbb{E}_i^{\textbf{u}^*}\biggl[\int_0^{T\wedge\tau_n}
\exp\bigl(\theta \int_0^tc(X_s, \textbf{u}^*(X_s))ds\bigr)h^*(X_t)dt\biggr]\\
&\leq h^*(i) +  \theta \rho^* \int_0^T \mathbb{E}_i^{\textbf{u}^*}
\biggl[\exp\bigl(\theta \int_0^{t\wedge \tau_n}c(X_s, \textbf{u}^*(X_s))ds\bigr)h^*(X_{t\wedge\tau_n})\biggr]dt\,.
\end{align*}
Hence by Gronwall's inequality we have
\begin{align*}\mathbb{E}_i^{\textbf{u}^*}&\biggl[\exp\biggl(\theta \int_0^{T\wedge \tau_n}
c(X_s,\textbf{u}^*(X_s))ds\biggr) h^*(X_{T\wedge\tau_n})\biggr] \leq h^*(i)e^{\theta \rho^* T}\,.
\end{align*}
Therefore letting $n \rightarrow \infty$ we have
\begin{align*}h^*(i)e^{\theta \rho^* T}\geq \epsilon \mathbb{E}_i^{\textbf{u}^*}
&\biggl[\exp\biggl(\theta \int_0^T c(X_s,\textbf{u}^*(X_s))ds\biggr)\biggr]\,.
\end{align*}
which implies
\begin{align*}\theta \rho^* \geq \lim_{T \rightarrow \infty}\frac{1}{T}\log \mathbb{E}_i^{\textbf{u}^*}
&\biggl[\exp\biggl(\theta \int_0^T c(X_s,\textbf{u}^*(X_s))ds\biggr)\biggr]= \theta \rho^{\textbf{u}^*}.
\end{align*}
Hence $\rho^*=\rho^{\textbf{u}^*}$.
\end{proof}
\section{\textbf{Policy Improvement Algorithm}}
In the previous section we have proved the existence of an optimal control.
But our theorem is purely existential and does not give an algorithm to find an optimal control.
In this section we focus on the computational approach for finding an optimal stationary control.
Since we are concerned with algorithm in this section we assume that both the state and action spaces are finite.
Now we describe the policy improvement algorithm.\\
\textbf{Algorithm}\\
\textbf{Step 1:} Start with any initial stationary control $\textbf{u}_1$. For this  $\textbf{u}_1$
$$\rho^{\textbf{u}_1}= \lim _{T \rightarrow \infty}\frac{1}{\theta T}\log \mathbb{E}_{i}^{\textbf{u}_1}
\biggl[\exp\biggl(\theta \int_0^T c(X_t,\textbf{u}_1(X_t))dt\biggr)\biggr]$$and
$$h^{\textbf{u}_1}(i) = \mathbb{E}_{i}^{\textbf{u}_1}\biggl[\exp\biggl(\theta \int_0^{\tau_0} (c(X_t,\textbf{u}_1(X_t))-\rho^{\textbf{u}_1})dt\biggr)\biggr]\,.$$
We know from previous section that $h^{\textbf{u}_1}$ satisfies the Poisson equation
\begin{eqnarray*}\sum_{j}\lambda_{ij}(\textbf{u}_1(i))h^{\textbf{u}_1}(j)+ \theta c(i, \textbf{u}_1(i))
h^{\textbf{u}_1}(i)=\theta \rho^{\textbf{u}_1}h^{\textbf{u}_1}(i)
\end{eqnarray*} satisfying the constraint $h^{\textbf{u}_1}(0)=1$.\\
\textbf{Step 2:} Define $\textbf{u}_2$ to be the stationary control which minimizes
$$\min_{u\in U_i}\bigl[\theta c(i,u) h^{\textbf{u}_1}(i) + \sum_j \lambda_{ij}(u)h^{\textbf{u}_1}(j)\bigr]\,.$$
With $\rho^{\textbf{u}_2}$ and $h^{\textbf{u}_2}$ as above continue the procedure.
\begin{thm}\label{algo}The above algorithm leads to an optimal control in finite number of steps.
\end{thm}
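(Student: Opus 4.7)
The plan is to combine the Poisson-equation characterization of $\rho^{\textbf{u}}$ from Section~4 with a Dynkin--Gronwall argument, in the spirit of the one at the end of the proof of the main theorem of Section~4, to establish: (i) $\{\rho^{\textbf{u}_n}\}$ is non-increasing; (ii) if $\textbf{u}_n$ is itself a minimizer of its own improvement functional, then $\rho^{\textbf{u}_n}=\rho^*$; and (iii) such a minimizer must be reached in finitely many iterations. Finiteness of $S$ and of each $U_i$ makes the set of stationary controls finite, which combined with (i) will drive (iii).

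For (i), the construction of $\textbf{u}_{n+1}$ together with the Poisson equation for $h^{\textbf{u}_n}$ gives the pointwise bound
\begin{equation*}
\sum_j \lambda_{ij}(\textbf{u}_{n+1}(i))\,h^{\textbf{u}_n}(j) + \theta c(i,\textbf{u}_{n+1}(i))\,h^{\textbf{u}_n}(i) \leq \theta\rho^{\textbf{u}_n}\,h^{\textbf{u}_n}(i).
\end{equation*}
Applying Dynkin's formula to $M_t:=e^{\theta\int_0^t c(X_s,\textbf{u}_{n+1}(X_s))\,ds}\,h^{\textbf{u}_n}(X_t)$ under $\mathbb{P}_i^{\textbf{u}_{n+1}}$ gives $\mathbb{E}_i^{\textbf{u}_{n+1}}[M_T]\leq h^{\textbf{u}_n}(i)+\theta\rho^{\textbf{u}_n}\int_0^T \mathbb{E}_i^{\textbf{u}_{n+1}}[M_t]\,dt$, and Gronwall's inequality yields $\mathbb{E}_i^{\textbf{u}_{n+1}}[M_T]\leq h^{\textbf{u}_n}(i)\,e^{\theta\rho^{\textbf{u}_n}T}$. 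The uniform lower bound $h^{\textbf{u}_n}\geq\epsilon>0$ established in Section~4 then gives $\rho^{\textbf{u}_{n+1}}\leq\rho^{\textbf{u}_n}$ after taking $\frac{1}{T}\log$ and letting $T\to\infty$.

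For (ii), if $\textbf{u}_n$ is a minimizer of its own improvement functional, then for every stationary control $\textbf{v}$ and every $i$,
\begin{equation*}
\sum_j \lambda_{ij}(\textbf{v}(i))\,h^{\textbf{u}_n}(j) + \theta c(i,\textbf{v}(i))\,h^{\textbf{u}_n}(i) \geq \theta\rho^{\textbf{u}_n}\,h^{\textbf{u}_n}(i).
\end{equation*}
A mirror Dynkin argument under $\mathbb{P}_i^{\textbf{v}}$, combined with the reverse-direction integral Gronwall ($f(T)\geq c+a\int_0^T f\,dt$ with $a\geq 0$ yields $f(T)\geq c\,e^{aT}$) and the uniform upper bound $h^{\textbf{u}_n}\leq e^V$ from Section~4, gives $\rho^{\textbf{v}}\geq\rho^{\textbf{u}_n}$ for every $\textbf{v}$, i.e., $\rho^{\textbf{u}_n}=\rho^*$.

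For (iii), finiteness of the policy set forces some $\textbf{u}_{n_0}$ to recur at a later index $n_1>n_0$; by (i), $\rho^{\textbf{u}_k}$ is constant on $n_0\leq k\leq n_1$, so $\rho^{\textbf{u}_{n_0+1}}=\rho^{\textbf{u}_{n_0}}=:\rho$. Applying the Collatz--Wielandt characterization of the principal eigenvalue of $L:=\Lambda^{\textbf{u}_{n_0+1}}+\theta c(\cdot,\textbf{u}_{n_0+1}(\cdot))$ to the positive test function $h^{\textbf{u}_{n_0}}$, and invoking Perron--Frobenius (applicable thanks to irreducibility (A4)), one concludes that $h^{\textbf{u}_{n_0}}$ is a scalar multiple of the principal eigenfunction of $L$, forcing pointwise equality in the bound used in (i). Combined with the Poisson equation for $\textbf{u}_{n_0}$, this shows that both $\textbf{u}_{n_0}$ and $\textbf{u}_{n_0+1}$ realize the minimum in the improvement functional associated to $h^{\textbf{u}_{n_0}}$, so $\textbf{u}_{n_0}$ is itself such a minimizer and hence optimal by (ii). This Perron--Frobenius / Collatz--Wielandt step is the main technical obstacle; an alternative probabilistic route is to show that equality in the Gronwall chain forces the non-negative defect $\theta\rho\,h^{\textbf{u}_{n_0}}-Lh^{\textbf{u}_{n_0}}$ to vanish, using irreducibility of $X$ under $\textbf{u}_{n_0+1}$.
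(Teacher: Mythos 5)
Your proposal is correct and shares the paper's overall skeleton (monotonicity of $\rho^{\textbf{u}_n}$ via Dynkin's formula and Gronwall, the fixed-point criterion \eqref{dpe} implying optimality, and finiteness of the policy space), but it handles the crucial step --- ruling out stalling at a non-optimal control --- by a genuinely different route. The paper proves directly that $\rho^{\textbf{u}_{n+1}}<\rho^{\textbf{u}_n}$ whenever $\textbf{u}_n$ is not optimal: assuming equality, it performs the multiplicative change of measure \eqref{twisted} built from $h^{\textbf{u}_{n+1}}$, notes that the twisted chain is irreducible and positive recurrent on the finite state space, and derives a contradiction because the ergodic time-average of the non-negative, not-identically-zero defect $g/h^{\textbf{u}_{n+1}}$ is strictly positive. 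You instead let a cycle occur and kill it with Perron--Frobenius: from $\rho^{\textbf{u}_{n_0+1}}=\rho^{\textbf{u}_{n_0}}=\rho$ and the subinvariance inequality $Lh^{\textbf{u}_{n_0}}\le\theta\rho\,h^{\textbf{u}_{n_0}}$ with $h^{\textbf{u}_{n_0}}>0$, the Collatz--Wielandt/subinvariance theorem for the irreducible non-negative matrix $L+\kappa I$ (whose Perron root is $\theta\rho+\kappa$ because $h^{\textbf{u}_{n_0+1}}$ is a positive eigenvector) forces equality, so the defect vanishes, $\textbf{u}_{n_0}$ already satisfies \eqref{dpe}, and your step (ii) applies. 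The two arguments are essentially dual: the paper's invariant measure $\widetilde{\pi}$ of the twisted chain is, up to the factor $h^{\textbf{u}_{n+1}}$, exactly the left Perron eigenvector $\pi$ you would pair against $\theta\rho h-Lh\ge 0$ to get $\pi(\theta\rho h-Lh)=0$ and hence $Lh=\theta\rho h$. Your version is more elementary (pure finite-dimensional linear algebra, no construction of the twisted kernel or appeal to its ergodicity), and the step you flag as the main obstacle is in fact classical, so no gap remains; what you give up is the quantitative strict decrease of $\rho$ at every non-optimal iterate and a formulation that extends more naturally beyond finite state spaces. Two minor points: justify the identification of $\theta\rho^{\textbf{u}}$ with the Perron root by observing that $h^{\textbf{u}}$ is a positive eigenvector of the irreducible matrix $\Lambda^{\textbf{u}}+\theta c+\kappa I$; and your reverse integral Gronwall in (ii), while valid as stated, can be bypassed entirely, as the paper does, by simply dropping the non-negative integrand in Dynkin's formula and bounding $h^{\textbf{u}_n}(X_T)$ by its maximum.
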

\begin{proof}
In order to prove that this algorithm comes up with an optimal control in a finite number of steps we first claim that
\begin{eqnarray}\label{compare}\rho^{\textbf{u}_{n+1}}\leq \rho^{\textbf{u}_n}\,.\end{eqnarray}
Indeed, from the definition of $\textbf{u}_{n+1}$ we have
\begin{align*}\sum_{j}\lambda_{ij}(\textbf{u}_{n+1}(i))h^{\textbf{u}_{n}}(j)+ \theta c(i, \textbf{u}_{n+1}(i))h^{\textbf{u}_{n}}(i)&\leq
\sum_{j}\lambda_{ij}(\textbf{u}_n(i))h^{\textbf{u}_n}(j)+ \theta c(i, \textbf{u}_n(i))h^{\textbf{u}_n}(i)\\&=\theta \rho^{\textbf{u}_n}h^{\textbf{u}_{n}}(i)\,.
\end{align*}
Now using arguments involving Dynkin's formula as in the previous section it can be proved that $\rho^{\textbf{u}_{n+1}}\leq \rho^{\textbf{u}_n}$.\\
Our second claim is that, suppose for some $n$ and for all $i$
{\small\begin{eqnarray}\label{dpe} \sum_{j}\lambda_{ij}(\textbf{u}_n(i))h^{\textbf{u}_n}(j)+ \theta c(i, \textbf{u}_n(i))h^{\textbf{u}_n}(i)
= \sum_{j}\lambda_{ij}(\textbf{u}_{n+1}(i))h^{\textbf{u}_{n}}(j)+ \theta c(i, \textbf{u}_{n+1}(i))h^{\textbf{u}_{n}}(i)\,
\end{eqnarray}}then $\textbf{u}_n$ is optimal.\\
To prove this observe that if $\textbf{u}_n$ is as in \eqref{dpe} then
\begin{align}\label{best}\nonumber\theta \rho^{\textbf{u}_n}h^{\textbf{u}_{n}}(i)&=
\sum_{j}\lambda_{ij}(\textbf{u}_n(i))h^{\textbf{u}_n}(i)+ \theta c(i, \textbf{u}_n(i))h^{\textbf{u}_n}(i)\\
\nonumber&= \sum_{j}\lambda_{ij}(\textbf{u}_{n+1}(i))h^{\textbf{u}_{n}}(i)+ \theta c(i, \textbf{u}_{n+1}(i))h^{\textbf{u}_{n}}(i)\\
&= \min_{u\in U_i}\bigl[\theta c(i,u) h^{\textbf{u}_n}(i) + \sum_j \lambda_{ij}(u)h^{\textbf{u}_n}(i)\bigr]\,.
\end{align}
Now for any stationary control $\textbf{u}$ we have by Dynkin's formula
\begin{align*}&\mathbb{E}_i^{\textbf{u}}\biggl[\exp\biggl(\theta\int_0^T(c(X_t,\textbf{u}(X_t))
-\rho^{\textbf{u}_n})dt\biggr)h^{\textbf{u}_{n}}(X_T)\biggr]
\\&= h^{\textbf{u}_{n}}(i) + \mathbb{E}_i^{\textbf{u}}\int_0^T \exp\biggl(\theta\int_0^t(c(X_s,\textbf{u}(X_s))-\rho^{\textbf{u}_n})ds\biggr)[\Lambda^{\textbf{u}}+\theta c - \theta \rho^{\textbf{u}_n}]h^{\textbf{u}_{n}}(X_t)dt\\&\geq h^{\textbf{u}_{n}}(i)\,.
\end{align*}The last inequality follows from \eqref{best}. Thus we have
\begin{align*}h^{\textbf{u}_{n}}(i)\leq \max_ih^{\textbf{u}_{n}}(i)\mathbb{E}_i^{\textbf{u}}
\biggl[\exp\biggl(\theta\int_0^T(c(X_t,\textbf{u}(X_t))-\rho^{\textbf{u}_n})dt\biggr)\biggr]\,.
\end{align*}This implies that
\begin{align*}\rho^{\textbf{u}_n} \leq \lim _{T \rightarrow \infty}\frac{1}{\theta T}\log \mathbb{E}_{i}^{\textbf{u}}\biggl[\exp\biggl(\theta \int_0^T c(X_t,\textbf{u}(X_t))dt\biggr)\biggr]=\rho^{\textbf{u}}\,.
\end{align*}Hence the claim.\\
Our final claim is that if $\textbf{u}_n$ is not an optimal control then the inequality in \eqref{compare} is actually a strict inequality.\\
Since $\textbf{u}_n$ is not optimal, we have
\begin{align*}\sum_{j}\lambda_{ij}(\textbf{u}_{n+1}(i))h^{\textbf{u}_{n}}(i)+ \theta c(i, \textbf{u}_{n+1}(i))h^{\textbf{u}_{n}}(i)-\theta \rho^{\textbf{u}_n}h^{\textbf{u}_{n}}(i)= -g(i)\,,
\end{align*} where $g$ is a non-negative function and there exists at least one $i$ such that $g(i)>0$. Therefore for any $T>0$ it follows from Dynkin's formula that
\begin{align}\label{crucial}\nonumber&\mathbb{E}^{\textbf{u}_{n+1}}_i\biggl[\exp\biggl(\theta \int_0^T (c(X_s,\textbf{u}_{n+1}(X_s))-\rho^{\textbf{u}_n})ds\biggr)h^{\textbf{u}_n}(X_T)\biggr]\\\nonumber&= h^{\textbf{u}_n}(i)+ \mathbb{E}^{\textbf{u}_{n+1}}_i \biggl[\int_0^T \exp\biggl(\theta \int_0^t (c(X_s,\textbf{u}_{n+1}(X_s))-\rho^{\textbf{u}_n})ds\biggr)\biggl(\theta c_{n+1}+ \Lambda^{n+1}-\theta \rho^{\textbf{u}_n}\biggr)h^{\textbf{u}_n}(X_t)dt\biggr]\\\nonumber&=h^{\textbf{u}_n}(i)-\mathbb{E}^{\textbf{u}_{n+1}}_i \biggl[\int_0^T \exp\biggl(\theta \int_0^t (c(X_s,\textbf{u}_{n+1}(X_s))-\rho^{\textbf{u}_n})ds\biggr)g(X_t)dt\biggr]
\\\nonumber&=h^{\textbf{u}_n}(i)-\int_0^T\exp-(\rho^{\textbf{u}_n}-\rho^{\textbf{u}_{n+1}})t\,\exp\biggl(\theta \int_0^t (c(X_s,\textbf{u}_{n+1}(X_s))-\rho^{\textbf{u}_{n+1}})ds\biggr)g(X_t)dt
\\&=h^{\textbf{u}_n}(i)- h^{\textbf{u}_{n+1}}(i)T\frac{1}{T}\mathbb{\widetilde{E}}_{i}^{\textbf{u}_{n+1}}\int_0^T \frac{g(X_t)}{h^{\textbf{u}_{n+1}}(X_t)}dt
\end{align}if $\rho^{\textbf{u}_{n+1}}=\rho^{\textbf{u}_n}$. In \eqref{crucial} the expectation operator $\mathbb{\widetilde{E}}^{\textbf{u}_{n+1}}_i$ is given by
\begin{eqnarray}\label{twisted}\mathbb{\widetilde{E}}_{i}^{\textbf{u}_{n+1}}f(X_t)=\mathbb{E}^{\textbf{u}_{n+1}}_i\biggl[\exp\biggl(\theta \int_0^t (c(X_s,\textbf{u}_{n+1}(X_s))-\rho^{\textbf{u}_{n+1}})ds\biggr)\frac{h^{\textbf{u}_{n+1}}(X_t)}{h^{\textbf{u}_{n+1}}(i)}f(X_t)dt\biggr]\,\end{eqnarray} for any real valued bounded function $f$. It is easy to see that \eqref{twisted} uniquely determines a transition probability kernel $\mathbb{\widetilde{P}}^{\textbf{u}_{n+1}}_i$ and under $\mathbb{\widetilde{P}}^{\textbf{u}_{n+1}}_i$ the corresponding Markov chain is still irreducible. Since the state space is finite, the Markov chain under $\mathbb{\widetilde{P}}^{\textbf{u}_{n+1}}_i$ is positive recurrent. Thus it has a unique invariant measure, say, $\widetilde{\pi}$. Now observe that the righthand side of \eqref{crucial} is negative for $T$ sufficiently large because $$\frac{1}{T}\mathbb{\widetilde{E}}_{i}^{\textbf{u}_{n+1}}\int_0^T \frac{g(X_t)}{h^{\textbf{u}_{n+1}}(X_t)}dt \rightarrow \sum \widetilde{\pi}(i)\frac{g(i)}{h^{\textbf{u}_{n+1}}(i)}>0\,.$$  But the lefthand side is always non-negative. Thus we get a contradiction and hence $$\rho^{\textbf{u}_{n+1}}<\rho^{\textbf{u}_n}\,.$$

\noindent From the above claims it follows that the algorithm comes up with the optimal control within a finite number of steps because the number of controls is finite.
\end{proof}
Some comments are now in order.
\begin{rem}Suppose the state and action spaces are finite. Let $\textbf{u}^*$ be an optimal control. Let $\rho^*$ be the optimal average cost and $$h^{\textbf{u}^*}(i)=\mathbb{E}_{i}^{\textbf{u}^*}\biggl[\exp\biggl(\theta \int_0^{\tau_0} (c(X_t,\textbf{u}^*(X_t))-\rho^*)dt\biggr)\biggr]\,.$$Then it follows from the arguments used in the proof of Theorem \ref{algo} that $(\rho^*,h^{\textbf{u}^*})$ satisfies the equation
\begin{eqnarray}\label{acdpe}\theta \rho^*h^{\textbf{u}^*}(i)=\min_u\bigl[\theta c(i,u) h^{\textbf{u}^*}(i) + \sum_j \lambda_{ij}(u)h^{\textbf{u}^*}(i)\bigr]\,.
\end{eqnarray}Equation \eqref{acdpe} is the HJB equation for the average cost criterion. If $(\lambda, h)$ is a solution of \eqref{acdpe} where $h$ is a positive function then using Dynkin's formula it can be shown that $\lambda$ is the optimal cost and the minimiser in \eqref{acdpe} is an optimal control.
\end{rem}
\begin{rem}If the state space is countably infinite and equation \eqref{acdpe} has a solution $(\lambda, h)$ such that $h$ is a bounded, positive function which is uniformly bounded away from $0$, then again it can be shown that $\lambda$ is the optimal cost and the minimiser in \eqref{acdpe} is an optimal control. However, we have not been able to show that \eqref{acdpe} has such a solution. If one assumes that \eqref{acdpe} has such a solution then one can develop value and policy iteration algorithm along the lines of \cite{Borkar}. In \cite{Borkar} the authors deal with discrete time Markov chains. There they have developed value and policy iteration algorithm under the assumption that analogous dynamic programming equation has a solution.
\end{rem}
\section{\textbf{Conclusion}}In this paper we have studied risk-sensitive optimal control problem for continuous time Markov chains. We have analysed the finite horizon case under fairly general conditions. For the infinite horizon discounted cost case we have assumed that the state space is finite. So it will be interesting to investigate the problem for the case of countably infinite state space. The average cost case has been studied under an additional Lyapunov type stability condition. We have established the existence of an optimal control. We have also developed policy iteration algorithm for the case of finite state and action spaces. For countable state space an algorithmic approach to determine an optimal control needs further investigation. \\
\textbf{Acknowledgement :} The authors wish to thank V. S. Borkar for helpful discussions.


\begin{thebibliography}{99}

\bibitem{Ari} A. Arapostathis, V. S. Borkar and M. K. Ghosh, \textit{Ergodic Control of Diffusion Processes}, Encyclopedia of Mathematics and its Applications 143, Cambridge University Press, Cambridge, 2012.
\bibitem{benes} V. E. Benes, \textit{Existence of optimal strategies based on specified information for a class of stochastic decision problems}, SIAM J. Control 8 (1970), 179-188.
\bibitem{Biswas} A. Biswas, V. S. Borkar and K. S. Kumar, \textit{Risk-sensitive control with near monotone cost}, Appl. Math. Optim. 62 (2010), 145-163.
\bibitem{Pliska1} T. R. Bielecki and S. R. Pliska, \textit{Risk-sensitive dynamic asset management}, Appl. Math. Optim. 39 (1999), 337-360.
\bibitem{Pliska2} T. R. Bielecki, S. R. Pliska and S. J. Sheu, \textit{Risk sensitive portfolio management with Cox-Ingersoll-Ross interst rates: the HJB equation}, SIAM J. Control Optim 44 (2005), 1811-1843.
\bibitem{Borkar} V. S. Borkar and S. P. Meyn, \textit{Risk-sensitive optimal control for Markov decision processes with monotone cost}, Math. Oper. Res. 27 (2002), 192-209.
\bibitem{Chung} K. J. Chung and M. J. Sobel, \textit{Discounted MDPs: distribution functions and exponential utility maximization}, SIAM J. Control Optim. 25 (1987), 49-62.
\bibitem{Ethier} S. N. Ethier and T. G. Kurtz, \textit{Markov Processes : Characterization and
Convergence}, John Wiley and Sons, 1986.
\bibitem{Fleming} W. H. Fleming and W. M. McEneaney, \textit{Risk sensitive control on an infinite horizon}, SIAM J. Control Optim. 33 (1995), 1881-1915.
\bibitem{Goswami} M. K. Ghosh, A. Goswami and S. K. Kumar, \textit{Portfolio Optimization in a semi-Markov modulated market}, Appl. Math. Optim. 60 (2009), 275-296.
\bibitem{hgbook} X. Guo and O. Hern\'{a}ndez-Lerma, \textit{Continuous-Time Markov Decision Processes. Theory and Applications},
Springer-Verlag, 2009.
\bibitem{Prieto} X. Guo, O. Hern\'{a}ndez-Lerma and T. Prieto-Rumeau, \textit{A survey of recent
results on continuous-time Markov decision processes}, TOP 14 (2006), 177-261.
\bibitem{Lerma} O. Hern\'{a}ndez-Lerma, \textit{Adaptive Markov Control Processes}, Springer-Verlag, New York, 1989.
\bibitem{Hernandez} D. Hern$\acute{a}$ndez-Hern$\acute{a}$ndez and S. I. Marcus, \textit{Risk sensitive control of Markov processes in countable state space}, Systems Control Lett. 29 (1996), 147-155.
\bibitem{Howard} R. A. Howard and J. E. Matheson, \textit{Risk-sensitive Markov decision processes}, Mananagement Sci. 18 (1972), 356-369.
\bibitem{Jacobson} D. H. Jacobson \textit{Optimal stochastic linear systems with exponential performance criteria and their relation to deterministic differential games}, IEEE Trans. on Automat. Control 18 (1973), 124-131.
\bibitem{JasKiewicz} A. Ja$\acute{s}$kiewicz, \textit{Average optimality for risk-sensitive control with general state space}, Ann. Appl. Probab. 17 (2007), 654-675.
\bibitem{Meyn1} I. Kontoyiannis and S. P. Meyn, \textit{Spectral theory and limit theorems for geometrically ergodic Markov processes}, Ann. Appl. Probab. 13 (2003), 304-362.
\bibitem{Meyn2} I. Kontoyiannis and S. P. Meyn, \textit{Large deviations asymptotics and the spectral theory of multiplicatively regular Markov Processes}, Electron. J. Probab. 10 (2005), 61-123.
\bibitem{Markowitz} H. Markowitz, \textit{Portfolio Selection}. J. of Finance 7 (1952), 77-91.
\bibitem{dimasi} G. B. Di Masi and L. Stettner, \textit{Risk-senstive control of discrete time Markov processes with infinite horizon}, SIAM J. Control Optim. 38 (1999), 61-78.
\bibitem{Menaldi} J. L. Menaldi and M. Robin, \textit{Remarks on risk sensitive control problems}, Appl. Math. Optim. 52 (2005), 297-310.
\bibitem{Nagai} H. Nagai, \textit{Bellman equations of risk-sensitive control}, SIAM J. Control Optim. 34 (1996), 74-101.
\bibitem{Sobel} G. E. Monahan and M. J. Sobel, \textit{Risk-sensitive dynamic market share attraction games}, Games Econom.Behav. 20 (1997), 149-160.
\bibitem{PR} T. Prieto-Romeau and O. Hern\'{a}ndez-Lerma, \textit{Variance minimization and the overtaking optimality
approach to continuous-time controlled Markov chains}, Math. Meth.
Oper. Res. 70 (2009), 527-540.
\bibitem{Sharpe} W. F. Sharpe, \textit{Capital asset prices: A theory of market equilibrium under conditions of risk}, J. of Finance 19 (1964), 425-442.
\bibitem{Whittle} P. Whittle, \textit{Risk-Sensitive Optimal Control}, Wiley, New York, 1990.
\end{thebibliography}
\end{document}